\providecommand{\tabularnewline}{\\}
\numberwithin{equation}{section}
\numberwithin{figure}{section}
\theoremstyle{plain}
\newtheorem{thm}{\protect\theoremname}[section]
  \theoremstyle{plain}
  \newtheorem{lem}[thm]{\protect\lemmaname}
  \theoremstyle{plain}
  \newtheorem{prop}[thm]{\protect\propositionname}
  \theoremstyle{plain}
  \newtheorem{cor}[thm]{\protect\corollaryname}
  \theoremstyle{remark}
  \newtheorem*{rem*}{\protect\remarkname}
  \providecommand{\corollaryname}{Corollary}
  \providecommand{\lemmaname}{Lemma}
  \providecommand{\propositionname}{Proposition}
  \providecommand{\remarkname}{Remark}
\providecommand{\theoremname}{Theorem}
\begin{document}

\title{Parity preservation of $K$-types under theta correspondence}

\author{Xiang Fan}

\email{fanx8@mail.sysu.edu.cn}

\address{School of Mathematics, Sun Yat-sen University, Guangzhou 510275,
China}

\subjclass[2000]{Primary 22E46, 11F27.}

\keywords{theta correspondence, reductive dual pair, Howe duality, $K$-type,
degree, parity}
\begin{abstract}
This note shows  a property of degree-parity preservation for $K$-types
under Howe\textquoteright s theta correspondence. As its application,
 we deduce the preservation of parity of all $K$-types occurring
in an arbitrary irreducible $(\mathfrak{g},K)$-module of any Lie
group in reductive dual pairs.
\end{abstract}

\maketitle

\section{Degree-parity Preservation}

\emph{Howe's duality correspondence} of irreducible admissible representations
for reductive dual pairs was introduced by Roger Howe in the 1970s.
It is also called the \emph{theta correspondence} as an extension
of Weil's representation-theoretic approach to classical $\theta$-series.
In this short note, a degree-parity preservation property (Theorem
\ref{thm:deg-parity}) for $K$-types is shown for the local theta
correspondence of reductive Lie groups, with an interesting application,
Theorem \ref{thm:Main} (or Theorem \ref{thm:gen} in its general
form), which asserts the preservation of parity of all $K$-types
occurring in an arbitrary irreducible admissible representation of
a Lie group in reductive dual pairs.

For a continuous admissible representation of a real reductive Lie
group $G$, as we focus on its $K$-spectrum, we may replace it by
its Harish-Chandra $(\mathfrak{g},K)$-module (consisting of its $K$-finite
smooth vectors). Here $K$ is a maximal compact subgroup of $G$,
and $\mathfrak{g}$ is the complexified Lie algebra of $G$. Throughout
this note, we use upper case Latin letters (e.g., $G$, $G'$, $K$,
$T$) to denote Lie groups, and the corresponding lower case Gothic
letters (e.g., $\mathfrak{g}$, $\mathfrak{g}'$, $\mathfrak{k}$,
$\mathfrak{t}$) to indicate their complexified Lie algebras.

By a \emph{$K$-module} we mean a continuous representations of $K$,
and by a \emph{$K$-type} we mean an equivalence class of irreducible
$K$-modules (which are automatically finite-dimensional and unitary).
Let $\mathcal{R}(K)$ denote the set of all $K$-types. By an abuse
of notation, for a $K$-type $\sigma\in\mathcal{R}(K)$, we also understand
$\sigma$ as an irreducible $K$-module (up to equivalence).

For a $(\mathfrak{g},K)$-module $V$, its \emph{$K$-spectrum} is
the $K$-module decomposition
\[
V\simeq\bigoplus_{\sigma\in\mathcal{R}(K)}W_{\sigma}^{\oplus m(\sigma,V)},
\]
where $W_{\sigma}$ is an underlying space of a $K$-type $\sigma$,
and
\[
m(\sigma,V)=\dim\mathrm{Hom}_{K}(W_{\sigma},V)=\dim\mathrm{Hom}_{K}(V,W_{\sigma})
\]
is the \emph{multiplicity} of $\sigma$ in $V$. We say that ``$\sigma$
occurs in $V$'' if $m(\sigma,V)\neq0$.  Denote the set of all
$K$-types occurring in $V$ by $\mathcal{R}(K,V)=\{\sigma\in\mathcal{R}(K):m(\sigma,V)\neq0\}$.
A finitely generated $(\mathfrak{g},K)$-module $V$ is called \emph{admissible}
if every $\sigma\in\mathcal{R}(K)$ has finite multiplicity in $V$.
It is well-known that every irreducible $(\mathfrak{g},K)$-module
is admissible.

A real\emph{ reductive dual pair} is a pair $(G,G')$ of closed reductive
subgroups of $\mathbf{Sp}=Sp_{2N}(\mathbb{R})$ (for some $N$) such
that they are mutual centralizers of each other. For a subgroup $E\subseteq\mathbf{Sp}$,
let $\widetilde{E}$ denote its preimage in the \emph{metaplectic
cover} (the unique non-trivial two-fold central extension) $\widetilde{\mathbf{Sp}}$
of $\mathbf{Sp}$. Indeed we have short exact sequence:
\[
1\to\mu_{2}\to\widetilde{\mathbf{Sp}}\to\mathbf{Sp}\to1,
\]
where $\mu_{2}=\mathrm{Ker}(\widetilde{\mathbf{Sp}}\to\mathbf{Sp})$
is the finite group of order $2$. Take the Segal-Shale-Weil\emph{
oscillator representation} (c.f. \cite{Shale1962linear,Weil1964certains,LionVergne1980theweil})
$\omega$ of $\widetilde{\mathbf{Sp}}$ (associated to the character
of $\mathbb{R}$ that sends $t$ to $\exp(2\pi\sqrt{-1}t)$). Let
$\mathfrak{sp}$ denote the complexified Lie algebra of $\mathbf{Sp}$,
and take $\mathbf{U}=U(N)$ as a maximal compact subgroup of $\mathbf{Sp}$.
Fock model realizes the $(\mathfrak{sp},\widetilde{\mathbf{U}})$-module
of $\omega$ on the space $\mathcal{F}=\mathrm{Poly}(\mathbb{C}^{N})$
of complex polynomials on $\mathbb{C}^{N}$.  

Assume that $G$ and $G'$ are embedded in $\mathbf{Sp}$ in such
a way that  $K=\mathbf{U}\cap G$ and $K'=\mathbf{U}\cap G'$ are
maximal compact subgroups of $G$ and $G'$ respectively. Hence $\widetilde{K}$
and $\widetilde{K'}$ are maximal compact subgroups of $\widetilde{G}$
and $\widetilde{G'}$ respectively. Let $\mathfrak{g}$ and $\mathfrak{g}'$
be the complexified Lie algebras of $G$ and $G'$ respectively.
\begin{lem}
[{Algebraic version of local theta correspondence over $\mathbb{R}$
\cite[Theorem 2.1]{Howe1989transcending}}]\label{lem:HoweDual}For
a real reductive dual pair $(G,G')$, if $\pi$ is an irreducible
$(\mathfrak{g},\widetilde{K})$-module, then
\begin{eqnarray*}
\mathcal{F}\bigg/\bigcap_{T\in\mathrm{Hom}_{\mathfrak{g},\widetilde{K}}(\mathcal{F},\pi)}\mathrm{Ker}(T) & \simeq & \pi\otimes\Theta(\pi),
\end{eqnarray*}
where $\Theta(\pi)$ is a finitely generated admissible $(\mathfrak{g}',\widetilde{K'})$-module.
Moreover, if $\Theta(\pi)$ is non-zero, then it has a unique irreducible
$(\mathfrak{g}',\widetilde{K'})$-quotient $\theta(\pi)$ called the
``theta lift'' of $\pi$.\end{lem}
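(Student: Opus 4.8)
This is the foundational theorem of the local theta correspondence, due to Howe, so I only sketch the strategy, following \cite{Howe1989transcending}. The plan is to extract the decomposition from a double-centralizer phenomenon for two subalgebras of $\mathrm{End}(\mathcal{F})$. I would first record the relevant structure of the Fock model: $\mathcal{F}=\mathrm{Poly}(\mathbb{C}^{N})$ carries the $(\mathfrak{sp},\widetilde{\mathbf{U}})$-module of $\omega$, so restricting along the Lie algebra homomorphisms $\mathfrak{g},\mathfrak{g}'\hookrightarrow\mathfrak{sp}$ and the group inclusions $\widetilde{K},\widetilde{K'}\hookrightarrow\widetilde{\mathbf{U}}$ endows $\mathcal{F}$ with a $(\mathfrak{g},\widetilde{K})$-module structure and a commuting $(\mathfrak{g}',\widetilde{K'})$-module structure. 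Since each homogeneous component of $\mathcal{F}$ is finite-dimensional and $\widetilde{\mathbf{U}}$-stable, $\mathcal{F}$ is admissible over $\widetilde{\mathbf{U}}$; combining this with the explicit joint harmonic decomposition furnished by classical invariant theory shows that $\mathcal{F}$ is admissible as a $\widetilde{K}\times\widetilde{K'}$-module, a point that will be used below.

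The crux is the mutual-centralizer property. Let $\mathcal{A}\subseteq\mathrm{End}(\mathcal{F})$ be the associative algebra generated by $\omega(U(\mathfrak{g}))$ and $\omega(\widetilde{K})$, and let $\mathcal{A}'$ be the analogous algebra attached to $(\mathfrak{g}',\widetilde{K'})$. The main assertion to be established is that $\mathcal{A}$ and $\mathcal{A}'$ are each other's full centralizers in $\mathrm{End}(\mathcal{F})$. I expect this to be the chief obstacle: it amounts to a strengthening of the first and second fundamental theorems of classical invariant theory for the reductive groups constituting the pair --- one identifies the centralizer of $\mathcal{A}$ with a space of invariant operators on $\mathcal{F}$ and must prove that it is generated by the manifestly available degree-$\le 2$ ones --- and it has to be carried out family by family, with careful bookkeeping for the two-fold metaplectic cover.

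Granting the centralizer property, I would fix an irreducible $(\mathfrak{g},\widetilde{K})$-module $\pi$ and let $M_{\pi}$ be the displayed quotient of $\mathcal{F}$, i.e.\ its largest $\pi$-isotypic quotient. The centralizer relation, applied through the finite-length subquotients of $\mathcal{F}$, forces $M_{\pi}$ to factor as $M_{\pi}\cong\pi\otimes\Theta(\pi)$ as a $(\mathfrak{g}\times\mathfrak{g}',\widetilde{K}\times\widetilde{K'})$-module, where $\Theta(\pi):=\mathrm{Hom}_{\mathfrak{g},\widetilde{K}}(\pi,M_{\pi})$ inherits its $(\mathfrak{g}',\widetilde{K'})$-structure from the $\mathcal{A}'$-action. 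Admissibility of $\Theta(\pi)$ then drops out of the $\widetilde{K}\times\widetilde{K'}$-admissibility of $\mathcal{F}$ (a given $\widetilde{K'}$-type occurs in $M_{\pi}$ with multiplicity bounded by its multiplicity in $\mathcal{F}$, once the $\pi$-factor is divided out), and finite generation of $\Theta(\pi)$ follows by a Noetherian argument exploiting the bounded-degree structure of the Fock model together with the realization of $\Theta(\pi)$ as a subquotient of a Noetherian module over a finitely generated subalgebra built from $\omega(U(\mathfrak{g}'))$.

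Finally, the uniqueness of the irreducible quotient $\theta(\pi)$ comes from the symmetry of the centralizer property: an irreducible $(\mathfrak{g}',\widetilde{K'})$-quotient $\rho'$ of $\Theta(\pi)$ produces a nonzero $(\mathfrak{g}\times\mathfrak{g}',\widetilde{K}\times\widetilde{K'})$-map $\mathcal{F}\twoheadrightarrow\pi\otimes\rho'$, and the centralizer relation together with the irreducibility of $\pi$ forces $\rho'$ to be determined by $\pi$, so that $\Theta(\pi)$ has a unique maximal proper $(\mathfrak{g}',\widetilde{K'})$-submodule, with $\theta(\pi)$ the corresponding quotient. Making this last step precise --- genuinely ruling out two non-isomorphic irreducible quotients --- is, after the centralizer theorem itself, the most delicate point, since it needs the finer structure of $\mathcal{F}$ as an $\mathcal{A}'$-module rather than merely the centralizer equality.
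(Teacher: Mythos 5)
The paper does not prove this lemma: it is cited verbatim as Howe's duality theorem, \cite[Theorem 2.1]{Howe1989transcending}, and treated as a black box throughout. There is therefore no ``paper's own proof'' to compare against; what can be assessed is whether your sketch faithfully reflects the known argument in Howe's paper.

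Your outline does capture the broad contour of Howe's approach --- realize $\omega$ in the Fock model, exploit a double-commutant phenomenon rooted in the first and second fundamental theorems of classical invariant theory, deduce the isotypic factorization $\pi\otimes\Theta(\pi)$, and then argue for uniqueness of the irreducible quotient. Two points of caution. First, the mutual-commutant statement is not correct ``in $\mathrm{End}(\mathcal{F})$'' as you write: that algebra is far too large (any $\mathcal{A}$-equivariant projection lies in the commutant of $\mathcal{A}$ but need not lie in $\mathcal{A}'$). Howe's commutant theorem lives inside the Weyl algebra $\omega(U(\mathfrak{sp}))$ of polynomial-coefficient differential operators on $\mathbb{C}^N$, and it is precisely this finiteness that makes the invariant-theoretic input bite; stating the claim in $\mathrm{End}(\mathcal{F})$ would be both false and useless. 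Second, the uniqueness of $\theta(\pi)$ does not ``come from the symmetry of the centralizer property'' in any direct way; it is the genuinely hard part of Howe's theorem and in his argument it passes through the fine structure of the space of joint harmonics and a lowest-degree analysis, not merely the commutant equality. You flag both of these as the delicate points, which is honest, but as written the sketch either misstates or defers exactly the steps on which the theorem turns, so it does not constitute a proof --- it is, as you say yourself, only a strategic outline of a cited result.
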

\begin{itemize}
\item The intersection in the quotient may be the whole $\mathcal{F}$.
This case happens $\Leftrightarrow$ $\mathrm{Hom}_{\mathfrak{g},\widetilde{K}}(\mathcal{F},\pi)=0$
$\Leftrightarrow$ $\Theta(\pi)=0$. In this case, let the theta lift
$\theta(\pi)=0$.
\item  When $\Theta(\pi)\neq0$, the ``uniqueness'' of $\theta(\pi)$
means that $\Theta(\pi)$ has a unique maximal proper sub-$(\mathfrak{g}',\widetilde{K'})$-module,
and it is the kernel of $\Theta(\pi)\to\theta(\pi)$.
\end{itemize}
 Recall that the oscillator representation $\omega$ is the direct
sum of two irreducible unitary representations of $\widetilde{\mathbf{Sp}}$,
and we have a $(\mathfrak{sp},\widetilde{\mathbf{U}})$-module decomposition
$\mathcal{F}=\mathcal{F}_{0}\oplus\mathcal{F}_{1}$, where $\mathcal{F}_{i}$
is the linear span of all homogeneous polynomials of degree $\equiv i$
$(\mathrm{mod}\ 2)$ in $\mathcal{F}=\mathrm{Poly}(\mathbb{C}^{N})$,
for $i\in\{0,1\}$.  Then as $(\mathfrak{g}',\widetilde{K}')$-modules
$\Theta(\pi)=\Theta_{0}(\pi)\oplus\Theta_{1}(\pi)$, with 
\[
\mathcal{F}_{i}\bigg/\bigcap_{T\in\mathrm{Hom}_{\mathfrak{g},\widetilde{K}}(\mathcal{F}_{i},\pi)}\mathrm{Ker}(T)\simeq\pi\otimes\Theta_{i}(\pi).
\]

\begin{prop}
\label{prop:either0} Let $\pi$ be an irreducible $(\mathfrak{g},\widetilde{K})$-module.
Then $\Theta_{i}(\pi)=0$ for at least one $i\in\{0,1\}$.  In other
words, $\mathrm{Hom}_{\mathfrak{g},\widetilde{K}}(\mathcal{F}_{i},\pi)=0$
for this $i$.\end{prop}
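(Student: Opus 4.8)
The plan is to exhibit a single element of $\widetilde{G}$ that is central (so it acts by a scalar on the irreducible module $\pi$) but acts by \emph{different} scalars on $\mathcal{F}_{0}$ and $\mathcal{F}_{1}$; any intertwining operator $\mathcal{F}_{i}\to\pi$ must respect that scalar, so the two spaces of such operators cannot both be nonzero.

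The element I would use is a preimage $\zeta\in\widetilde{\mathbf{Sp}}$ of $-I_{2N}\in\mathbf{Sp}$. First I would check that $\zeta$ is central in $\widetilde{\mathbf{Sp}}$: since $-I_{2N}$ lies in the center of $\mathbf{Sp}$, for $\widetilde{g}\in\widetilde{\mathbf{Sp}}$ over $g\in\mathbf{Sp}$ the commutator $\widetilde{g}\,\zeta\,\widetilde{g}^{-1}\zeta^{-1}$ lies in $\mu_{2}$, depends only on $g$, and defines a homomorphism $\mathbf{Sp}\to\mu_{2}$, which is trivial because $\mathbf{Sp}=Sp_{2N}(\mathbb{R})$ is connected. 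Since $-I_{2N}$ is central it commutes with $G'$, so $-I_{2N}$ lies in its centralizer $G$; and as $-I_{2N}$ corresponds to $-I_{N}\in U(N)=\mathbf{U}$ we get $-I_{2N}\in K=\mathbf{U}\cap G$, so $\zeta\in\widetilde{K}$ is central in $\widetilde{K}$ with $\mathrm{Ad}(\zeta)=\mathrm{id}$ on $\mathfrak{g}$. Hence $\pi(\zeta)$ is a $(\mathfrak{g},\widetilde{K})$-endomorphism of $\pi$, and Schur's lemma (applicable since $\pi$ is irreducible admissible) gives $\pi(\zeta)=c\cdot\mathrm{id}$ for a nonzero scalar $c$.

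Next I would compute the action of $\zeta$ on the Fock space. In the Fock model $\widetilde{\mathbf{U}}$ acts through a fixed character times the natural (degree-preserving) action of $\mathbf{U}$ on $\mathrm{Poly}(\mathbb{C}^{N})$, and $-I_{N}\in U(N)$ acts on the homogeneous polynomials of degree $d$ by $(-1)^{d}$; hence $\zeta$ acts on the degree-$d$ part by $\mu\,(-1)^{d}$ for one fixed nonzero scalar $\mu$. (This is the only spot where a concrete computation is required; equivalently one compares the lowest $\widetilde{\mathbf{U}}$-types, namely the constants in $\mathcal{F}_{0}$ and the linear polynomials in $\mathcal{F}_{1}$, on which $-I_{N}$ acts by $+1$ and $-1$.) Therefore $\zeta$ acts by the scalar $\mu$ on all of $\mathcal{F}_{0}$ and by $-\mu$ on all of $\mathcal{F}_{1}$.

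Finally, if $T\in\mathrm{Hom}_{\mathfrak{g},\widetilde{K}}(\mathcal{F}_{i},\pi)$ is nonzero then for $v\in\mathcal{F}_{i}$ one has $c\,T(v)=\pi(\zeta)\,T(v)=T(\zeta v)=(-1)^{i}\mu\,T(v)$, forcing $c=(-1)^{i}\mu$. Were both $\mathrm{Hom}_{\mathfrak{g},\widetilde{K}}(\mathcal{F}_{0},\pi)$ and $\mathrm{Hom}_{\mathfrak{g},\widetilde{K}}(\mathcal{F}_{1},\pi)$ nonzero, we would get $c=\mu=-\mu$, hence $\mu=0$, a contradiction; so at least one of them vanishes, which is the assertion. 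I expect the main point to be the centrality of $\zeta$ together with the legitimacy of applying Schur's lemma to the possibly infinite-dimensional $\pi$; the eigenvalue computation is routine but genuinely needed.
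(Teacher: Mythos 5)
Your proof is correct, but it takes a genuinely different route from the paper's. The paper argues by contradiction from the \emph{uniqueness} clause of Howe duality (Lemma~\ref{lem:HoweDual}): if both $\Theta_{0}(\pi)$ and $\Theta_{1}(\pi)$ were nonzero, each would contain a maximal proper submodule $\Pi_{i}$, and then $\Pi_{0}\oplus\Theta_{1}(\pi)$ and $\Theta_{0}(\pi)\oplus\Pi_{1}$ would be two distinct maximal proper submodules of $\Theta(\pi)$, yielding two irreducible $(\mathfrak{g}',\widetilde{K'})$-quotients and contradicting the uniqueness of $\theta(\pi)$. You instead exhibit the preimage $\zeta\in\widetilde{\mathbf{Sp}}$ of the central element $-I_{2N}$, check it is central (lies in $\widetilde{K}$ since $-I_{2N}\in Z(\mathbf{Sp})\subseteq Z_{\mathbf{Sp}}(G')\cap\mathbf{U}=K$, and is $\widetilde{\mathbf{Sp}}$-central by the connectedness argument), invoke Schur's lemma to see it acts on $\pi$ by a single scalar $c$, and observe that in the Fock model it acts by $\mu$ on $\mathcal{F}_{0}$ and $-\mu$ on $\mathcal{F}_{1}$; a nonzero intertwiner from $\mathcal{F}_{i}$ then forces $c=(-1)^{i}\mu$, so both spaces cannot be nonzero. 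All of these steps are sound; the only spot worth flagging is the application of Schur's lemma to the possibly infinite-dimensional $\pi$, which you do address, and which can also be settled without Dixmier by noting that $\pi(\zeta)$ preserves each finite-dimensional $\widetilde{K}$-isotypic component, so a nonzero eigenspace is a nonzero $(\mathfrak{g},\widetilde{K})$-submodule and hence all of $\pi$. What your approach buys is independence from the deep uniqueness part of Howe's theorem: you only use that $\pi$ is irreducible admissible and a concrete computation with the center of the metaplectic group, which is why you can formulate the conclusion directly as the vanishing of $\mathrm{Hom}_{\mathfrak{g},\widetilde{K}}(\mathcal{F}_{i},\pi)$ for one $i$ without ever invoking $\Theta(\pi)$. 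The paper's proof is shorter given that Lemma~\ref{lem:HoweDual} is already in hand, and it stays entirely inside the module-theoretic formalism used throughout the rest of the argument; your proof is closer in spirit to the remark after Theorem~\ref{thm:deg-parity}, which notes that Corollary~\ref{cor:disjoint} can also be obtained by more classical means.
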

\begin{proof}
Otherwise, $\Theta(\pi)=\Theta_{0}(\pi)\oplus\Theta_{1}(\pi)$ with
both $\Theta_{i}(\pi)$ non-zero finitely generated admissible $(\mathfrak{g}',\widetilde{K'})$-modules.
By Zorn's Lemma,  $\Theta_{i}(\pi)$ contains a maximal proper sub-$(\mathfrak{g}',\widetilde{K'})$-module
$\Pi_{i}$. Then both $\Theta(\pi)/(\Pi_{0}\oplus\Theta_{1}(\pi))$
and $\Theta(\pi)/(\Theta_{0}(\pi)\oplus\Pi_{1})$ are non-zero irreducible
 $(\mathfrak{g}',\widetilde{K'})$-quotients,   in contradiction
with the uniqueness in Lemma \ref{lem:HoweDual}.\end{proof}
\begin{cor}
\label{cor:disjoint}$\mathcal{R}(\widetilde{K},\mathcal{F}_{0})\cap\mathcal{R}(\widetilde{K},\mathcal{F}_{1})=\O$.\end{cor}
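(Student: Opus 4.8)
The plan is to exhibit a single element of $\widetilde{K}$ that acts by different scalars on $\mathcal{F}_{0}$ and on $\mathcal{F}_{1}$; then no $\widetilde{K}$-type can occur in both, because a $\widetilde{K}$-equivariant embedding of an irreducible $\widetilde{K}$-module into $\mathcal{F}_{i}$ forces that element to act on the module by the scalar attached to $\mathcal{F}_{i}$.

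The element I would use is $z=-\mathrm{id}$. Every group occurring as a member of a real reductive dual pair (an orthogonal, unitary, symplectic or quaternionic isometry group, a general linear group, or a product of such) contains $-\mathrm{id}$, and under the embedding $G\hookrightarrow\mathbf{Sp}$ this $z$ goes to the scalar transformation by $-1$ of $\mathbb{R}^{2N}$, which under $\mathbb{R}^{2N}\cong\mathbb{C}^{N}$ is $-I_{N}\in\mathbf{U}=U(N)$. Hence $z\in\mathbf{U}\cap G=K$, so a preimage $\widetilde{z}\in\widetilde{K}$ is available. Next I would compute its action in the Fock model: the oscillator action of $\widetilde{\mathbf{U}}$ on $\mathcal{F}=\mathrm{Poly}(\mathbb{C}^{N})$ preserves each homogeneous component $\mathcal{F}^{(d)}=\mathrm{Poly}_{d}(\mathbb{C}^{N})$ and, restricted to $\mathcal{F}^{(d)}$, equals the natural linear action of $U(N)$ on degree-$d$ polynomials twisted by a fixed genuine character $\nu$ of $\widetilde{\mathbf{U}}$ (a ``$\det^{-1/2}$'', so $\nu^{2}=\det{}^{-1}$). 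Since $-I_{N}$ acts naturally on $\mathcal{F}^{(d)}$ by $p(v)\mapsto p(-v)=(-1)^{d}p(v)$, the operator $\widetilde{z}$ acts on $\mathcal{F}^{(d)}$ by $c\,(-1)^{d}$ with $c=\nu(\widetilde{z})\neq 0$. Therefore $\widetilde{z}$ acts by the scalar $c$ on all of $\mathcal{F}_{0}$ and by the scalar $-c$ on all of $\mathcal{F}_{1}$.

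To finish: if some $\sigma\in\mathcal{R}(\widetilde{K})$ occurred in both $\mathcal{F}_{0}$ and $\mathcal{F}_{1}$, then a nonzero $\widetilde{K}$-map $W_{\sigma}\to\mathcal{F}_{0}$ forces $\sigma(\widetilde{z})=c\cdot\mathrm{id}_{W_{\sigma}}$ (its kernel is a proper, hence zero, submodule), while a nonzero $\widetilde{K}$-map $W_{\sigma}\to\mathcal{F}_{1}$ forces $\sigma(\widetilde{z})=-c\cdot\mathrm{id}_{W_{\sigma}}$; since $c\neq -c$ this is impossible. Hence $\mathcal{R}(\widetilde{K},\mathcal{F}_{0})\cap\mathcal{R}(\widetilde{K},\mathcal{F}_{1})=\emptyset$.

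The step needing the most care is the first one: checking, case by case in the classification of reductive dual pairs (notably for type II pairs and for reducible pairs), that $-\mathrm{id}\in G$ and that it maps to the scalar $-I_{N}$, and pinning down the genuine character $\nu$ so that the twisting factor is genuinely independent of $d$; everything after that is a one-line application of Schur's lemma. (One could instead try to deduce the corollary directly from Proposition \ref{prop:either0}, but that route appears to require $\mathcal{F}_{i}$ to be admissible as a $(\mathfrak{g},\widetilde{K})$-module, which fails in general, so the central-element argument is cleaner.)
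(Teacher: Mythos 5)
Your proof is correct, but it follows a genuinely different route from the paper's. The paper applies Proposition~\ref{prop:either0} not to the original pair $(G,G')$ but to the compact reductive dual pair $(K,M)$, where $M=Z_{\mathbf{Sp}}(K)$; there an arbitrary $\widetilde{K}$-type $\sigma$ \emph{is} an irreducible $(\mathfrak{k},\widetilde{K})$-module, so Lemma~\ref{lem:HoweDual} and Proposition~\ref{prop:either0} apply directly and give $\mathrm{Hom}_{\widetilde{K}}(\mathcal{F}_{i},\sigma)=0$ for some $i$. This sidesteps the admissibility worry you raise in your last paragraph: no admissibility of $\mathcal{F}_i$ is needed, since $\Theta(\sigma)$ is the admissible object, not $\mathcal{F}_i$. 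Your argument instead exhibits a concrete scalar obstruction via $\widetilde{z}$, a preimage of $-\mathrm{id}$, acting by $\nu(\widetilde{z})\,(-1)^{d}$ on $\mathcal{F}^{(d)}$. That is valid and quite clean; it only uses the Fock-model description of the $\widetilde{\mathbf{U}}$-action and Schur's lemma, and does not invoke Howe duality at all. One simplification: the ``case-by-case check'' you flag is unnecessary. Since $-\mathrm{id}_{\mathbf{Sp}}=-I_{2N}$ is central in $\mathbf{Sp}$, it automatically lies in $G=Z_{\mathbf{Sp}}(G')$, and it is visibly $-I_{N}\in\mathbf{U}=U(N)$, hence lies in $K=\mathbf{U}\cap G$ for every real reductive dual pair without inspecting the classification. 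In short: the paper's proof is a slick application of the uniqueness in Howe duality for the auxiliary pair $(K,M)$ and so fits naturally after Proposition~\ref{prop:either0}; yours is a more elementary central-character argument that stands independently of Lemma~\ref{lem:HoweDual} and would also be convincing to a reader who wants a direct reason for the disjointness.
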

\begin{proof}
Let $M$ be the centralizer of $K$ in $\mathbf{Sp}$, then $(K,M)$
is also a reductive dual pair \cite[Fact 1]{Howe1989transcending}.
 For the dual pair $(K,M)$ and any $\widetilde{K}$-type $\sigma$,
Proposition \ref{prop:either0} asserts that $\mathrm{Hom}_{\widetilde{K}}(\mathcal{F}_{i},\sigma)=0$
for some $i\in\{0,1\}$. Then $m(\sigma,\mathcal{F}_{i})=0$ and $\sigma\notin\mathcal{R}(\widetilde{K},\mathcal{F}_{i})$.
\end{proof}
For $\sigma\in\mathcal{R}(\widetilde{K},\mathcal{F})$, \cite{Howe1989transcending}
defines the \emph{degree} $\mathrm{deg}(\sigma)$ with respect to
$(G,G')$ as the minimal degree of polynomials in the \emph{$\sigma$-isotypic
subspace} $\mathcal{F}_{\sigma}=\sum_{\varphi\in\mathrm{Hom}_{\widetilde{K}}(\sigma,\mathcal{F})}\mathrm{Im}(\varphi)$.

\begin{cor}
\label{cor:deg_i}If $\sigma\in\mathcal{R}(\widetilde{K},\mathcal{F}_{i})$
for some $i\in\{0,1\}$, then $\mathrm{deg}(\sigma)\equiv i$ $(\mathrm{mod}\ 2)$.\end{cor}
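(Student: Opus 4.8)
The plan is to reduce everything to the simple observation that the parity decomposition $\mathcal{F}=\mathcal{F}_{0}\oplus\mathcal{F}_{1}$ is stable under $\widetilde{K}$, so that the $\sigma$-isotypic subspace $\mathcal{F}_{\sigma}$ cannot straddle both parities once Corollary \ref{cor:disjoint} is invoked. First I would record that $\widetilde{K}\subseteq\widetilde{\mathbf{U}}$ acts on $\mathcal{F}=\mathrm{Poly}(\mathbb{C}^{N})$ preserving the grading by homogeneous degree, hence preserving each $\mathcal{F}_{i}$. Consequently every $\widetilde{K}$-map $\varphi\colon\sigma\to\mathcal{F}$ splits as $\varphi=\varphi_{0}+\varphi_{1}$, where $\varphi_{j}$ is the composition of $\varphi$ with the projection $\mathcal{F}\to\mathcal{F}_{j}$, and $\varphi_{j}\in\mathrm{Hom}_{\widetilde{K}}(\sigma,\mathcal{F}_{j})$; in particular $\mathrm{Hom}_{\widetilde{K}}(\sigma,\mathcal{F})=\mathrm{Hom}_{\widetilde{K}}(\sigma,\mathcal{F}_{0})\oplus\mathrm{Hom}_{\widetilde{K}}(\sigma,\mathcal{F}_{1})$.

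Next, given $\sigma\in\mathcal{R}(\widetilde{K},\mathcal{F}_{i})$, Corollary \ref{cor:disjoint} tells us $\sigma\notin\mathcal{R}(\widetilde{K},\mathcal{F}_{1-i})$, i.e.\ $\mathrm{Hom}_{\widetilde{K}}(\sigma,\mathcal{F}_{1-i})=0$. So in the above splitting the component $\varphi_{1-i}$ vanishes for every $\varphi$, whence $\mathrm{Im}(\varphi)=\mathrm{Im}(\varphi_{i})\subseteq\mathcal{F}_{i}$ for all $\varphi\in\mathrm{Hom}_{\widetilde{K}}(\sigma,\mathcal{F})$. Summing over $\varphi$ gives $\mathcal{F}_{\sigma}\subseteq\mathcal{F}_{i}$.

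Finally I would observe that every nonzero $p\in\mathcal{F}_{i}$ is a linear combination of homogeneous polynomials of degree $\equiv i$ $(\mathrm{mod}\ 2)$, so its top-degree term, and hence its ordinary polynomial degree, is $\equiv i$ $(\mathrm{mod}\ 2)$. Applying this to a minimal-degree nonzero element of $\mathcal{F}_{\sigma}$ yields $\mathrm{deg}(\sigma)\equiv i$ $(\mathrm{mod}\ 2)$. There is essentially no obstacle here; the only point requiring a moment's care is the compatibility of $\mathcal{F}_{\sigma}$ with the parity decomposition, which is exactly the $\widetilde{K}$-invariance of each $\mathcal{F}_{i}$ used in the first step.
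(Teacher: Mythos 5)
Your argument is correct and follows the paper's proof exactly, just with the intermediate steps spelled out: the paper also deduces $\mathcal{F}_{\sigma}\subseteq\mathcal{F}_{i}$ directly from Corollary~\ref{cor:disjoint} and then reads off the degree parity. The splitting $\mathrm{Hom}_{\widetilde{K}}(\sigma,\mathcal{F})=\mathrm{Hom}_{\widetilde{K}}(\sigma,\mathcal{F}_{0})\oplus\mathrm{Hom}_{\widetilde{K}}(\sigma,\mathcal{F}_{1})$ you make explicit is exactly what the paper leaves implicit.
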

\begin{proof}
By Corollary \ref{cor:disjoint}, $\sigma\notin\mathcal{R}(\widetilde{K},\mathcal{F}_{1-i})$.
So $\mathcal{F}_{\sigma}\subseteq\mathcal{F}_{i}$ and $\mathrm{deg}(\sigma)\equiv i$
$(\mathrm{mod}\ 2)$.\end{proof}
\begin{thm}
\label{thm:deg-parity} For a real reductive dual pair $(G,G')$,
let $\pi$ be an irreducible $(\mathfrak{g},\widetilde{K})$-module
with a non-zero theta lift. If $\sigma_{1}$, $\sigma_{2}\in\mathcal{R}(\widetilde{K},\pi)$,
then $\sigma_{1}$, $\sigma_{2}\in\mathcal{R}(\widetilde{K},\mathcal{F})$
and $\mathrm{deg}(\sigma_{1})\equiv\mathrm{deg}(\sigma_{2})$ $(\mathrm{mod}\ 2)$.\end{thm}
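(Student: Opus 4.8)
The plan is to push everything through Lemma \ref{lem:HoweDual} and reduce to Corollary \ref{cor:deg_i}. Since $\pi$ has a non-zero theta lift, Lemma \ref{lem:HoweDual} gives $\Theta(\pi)\neq0$, and because $\Theta(\pi)=\Theta_0(\pi)\oplus\Theta_1(\pi)$ while Proposition \ref{prop:either0} forces at least one summand to vanish, there is a \emph{unique} $j\in\{0,1\}$ with $\Theta_{1-j}(\pi)=0$ and $\Theta(\pi)=\Theta_j(\pi)\neq0$. The displayed isomorphism preceding Proposition \ref{prop:either0} then specializes to
\[
\mathcal{F}_{j}\bigg/\bigcap_{T\in\mathrm{Hom}_{\mathfrak{g},\widetilde{K}}(\mathcal{F}_{j},\pi)}\mathrm{Ker}(T)\simeq\pi\otimes\Theta(\pi),
\]
exhibiting $\pi\otimes\Theta(\pi)$ as a $(\mathfrak{g},\widetilde{K})$-module quotient of $\mathcal{F}_j$, in particular as a $\widetilde{K}$-module quotient of $\mathcal{F}_j$.

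Next I would compare $\widetilde{K}$-spectra along the chain consisting of the submodule $\mathcal{F}_j\subseteq\mathcal{F}$ and the surjection $\mathcal{F}_j\to\pi\otimes\Theta(\pi)$. Restricted to $\widetilde{K}$, which acts only on the first tensor factor, $\pi\otimes\Theta(\pi)$ is a direct sum of $\dim_{\mathbb{C}}\Theta(\pi)\geq1$ copies of $\pi$, so $\mathcal{R}(\widetilde{K},\pi\otimes\Theta(\pi))=\mathcal{R}(\widetilde{K},\pi)$; in particular $\sigma_1,\sigma_2$ occur in $\pi\otimes\Theta(\pi)$. Because $\widetilde{K}$ is compact, its representations are completely reducible, so the $\widetilde{K}$-spectrum of any subquotient of a $\widetilde{K}$-module is contained in that of the module itself; hence $\sigma_1,\sigma_2\in\mathcal{R}(\widetilde{K},\pi\otimes\Theta(\pi))\subseteq\mathcal{R}(\widetilde{K},\mathcal{F}_j)\subseteq\mathcal{R}(\widetilde{K},\mathcal{F})$. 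This already yields the first assertion of the theorem, and moreover places \emph{both} $\sigma_1$ and $\sigma_2$ in $\mathcal{R}(\widetilde{K},\mathcal{F}_j)$ for one and the \emph{same} $j$.

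Finally, applying Corollary \ref{cor:deg_i} to each of $\sigma_1,\sigma_2\in\mathcal{R}(\widetilde{K},\mathcal{F}_j)$ gives $\mathrm{deg}(\sigma_1)\equiv j\equiv\mathrm{deg}(\sigma_2)\ (\mathrm{mod}\ 2)$, which is the remaining claim. I do not expect a genuine obstacle here: the argument is essentially bookkeeping once Proposition \ref{prop:either0} is in hand. The only points requiring a moment's care are that tensoring by the non-zero space $\Theta(\pi)$ does not alter the $\widetilde{K}$-spectrum and that passing to a subquotient cannot enlarge it, and both follow solely from complete reducibility of representations of the compact group $\widetilde{K}$; the real content, namely the dichotomy that some $\Theta_i(\pi)$ vanishes (coming from uniqueness of the theta lift in Lemma \ref{lem:HoweDual}), has already been isolated in Proposition \ref{prop:either0}.
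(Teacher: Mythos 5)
Your proof is correct and follows essentially the same route as the paper's: fix the unique $i\in\{0,1\}$ for which $\pi$ receives a non-zero $(\mathfrak{g},\widetilde{K})$-map from $\mathcal{F}_i$, deduce that every $\sigma\in\mathcal{R}(\widetilde{K},\pi)$ lies in $\mathcal{R}(\widetilde{K},\mathcal{F}_i)$, and invoke Corollary \ref{cor:deg_i}. The only cosmetic difference is the middle step: the paper directly composes a surjection $T:\mathcal{F}_i\twoheadrightarrow V_\pi$ with a surjection $V_\pi\twoheadrightarrow W_{\sigma_j}$ to exhibit $\sigma_j$ in $\mathcal{F}_i$, while you instead pass through the full Howe isomorphism $\mathcal{F}_i/\!\bigcap\mathrm{Ker}(T)\simeq\pi\otimes\Theta(\pi)$ and the observation that $\pi\otimes\Theta(\pi)$, as a $\widetilde{K}$-module, is a direct sum of copies of $\pi$; both are correct and of comparable length, and your invocation of Proposition \ref{prop:either0} to pin down a unique $j$ is not strictly needed (any $i$ with $\Theta_i(\pi)\neq0$ suffices), though it does no harm.
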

\begin{proof}
 Let $V_{\pi}$ be an underlying space of $\pi$. By definition
$\mathrm{Hom}_{\mathfrak{g},\widetilde{K}}(\mathcal{F}_{i},V_{\pi})\neq0$
for some $i\in\{0,1\}$. Take a non-zero $T\in\mathrm{Hom}_{\mathfrak{g},\widetilde{K}}(\mathcal{F}_{i},V_{\pi})$.
As $\pi$ is an irreducible $(\mathfrak{g},\widetilde{K})$-module,
the image $T(\mathcal{F}_{i})=V_{\pi}$.

For $j\in\{0,1\}$, let $W_{\sigma_{j}}$ be an underlying space of
$\sigma_{j}$. As $\dim\mathrm{Hom}_{\widetilde{K}}(V_{\pi},W_{\sigma_{j}})=m(\sigma_{j},V_{\pi})>0$,
take a non-zero $\varphi_{j}\in\mathrm{Hom}_{\widetilde{K}}(V_{\pi},W_{\sigma_{j}})$.
As $W_{\sigma_{j}}$ is an irreducible $\widetilde{K}$-module, the
image $\varphi_{j}(V_{\pi})=W_{\sigma_{j}}$. Now the surjective $\varphi_{j}\circ T:\mathcal{F}_{i}\to W_{\sigma_{j}}$
gives a non-zero element of $\mathrm{Hom}_{\widetilde{K}}(\mathcal{F}_{i},W_{\sigma_{j}})$.
So $m(\sigma_{j},\mathcal{F}_{i})>0$, $\sigma_{j}\in\mathcal{R}(\widetilde{K},\mathcal{F}_{i})\subseteq\mathcal{R}(\widetilde{K},\mathcal{F})$,
and $\mathrm{deg}(\sigma_{j})\equiv i$ $(\mathrm{mod}\ 2)$ for both
$j\in\{0,1\}$ by Corollary \ref{cor:deg_i}.\end{proof}
\begin{rem*}
Corollary \ref{cor:disjoint} (and the degree-parity preservation
of Theorem \ref{thm:deg-parity} as a consequence) can also be deduced
from classical invariant theory, similar to the proof of \cite[Lemma 6]{Fan2017explicit}
based on \cite[(3.9)(b)]{Howe1989transcending} and \cite{Howe1989remark}.
\end{rem*}

\section{Parity Preservation}

From Theorem \ref{thm:deg-parity}, we deduce the preservation of
parity of all $K$-types occurring in an arbitrary irreducible $(\mathfrak{g},K)$-module
of a Lie group $G$ in real reductive dual pairs.
\begin{thm}
[Parity preservation]\label{thm:Main} Let $G$ and $K$ be as in
the following table, with  $K$ embedded in $G$ as a maximal compact
subgroup in the usual way. If $\pi$ is an irreducible $(\mathfrak{g},K)$-module
and $\sigma_{1}$, $\sigma_{2}\in\mathcal{R}(K,\pi)$, then $\varepsilon(\sigma_{1})=\varepsilon(\sigma_{2})$,
where $\varepsilon:\mathcal{R}(K)\to\mathbb{Z}/2\mathbb{Z}$ is the
parity of $K$-types defined explicitly in the next subsection.
\end{thm}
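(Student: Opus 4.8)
The plan is to reduce Theorem \ref{thm:Main} to Theorem \ref{thm:deg-parity} by exhibiting, for each group $G$ in the table, a real reductive dual pair $(G, G')$ (or more precisely a dual pair involving $\widetilde{G}$) inside some symplectic group $\mathbf{Sp}$ such that (a) the maximal compact $K$ sits inside $\mathbf{U} = U(N)$ in the prescribed way, (b) the metaplectic cover splits over $\widetilde{K}$ so that $\widetilde{K}$-types and $K$-types correspond (possibly after twisting by the relevant genuine character), and (c) every irreducible $(\mathfrak{g}, K)$-module $\pi$, viewed as a $(\mathfrak{g}, \widetilde{K})$-module, has a non-zero theta lift to $\widetilde{G'}$. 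Granting these, Theorem \ref{thm:deg-parity} tells us that all $\widetilde{K}$-types occurring in $\pi$ have degree of a single parity, and so the definition $\varepsilon(\sigma) := \deg(\sigma) \bmod 2$ (to be spelled out in the next subsection, where one checks it is independent of the auxiliary choices and has the claimed explicit form) immediately gives $\varepsilon(\sigma_1) = \varepsilon(\sigma_2)$.

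The key steps, in order, are: first, for each row of the table choose the companion group $G'$ — the natural choice is to take $G'$ so that $(G, G')$ is a dual pair of the smallest possible size, e.g. pairing $G$ with a compact or small group, or using the "doubling" construction; one must verify $K = \mathbf{U} \cap G$ there. Second, settle the cover issue: determine when $\widetilde{G} \to G$ splits, identify the genuine character $\chi$ of $\widetilde{K}$ (coming from a choice of square root of a determinant character), and define $\varepsilon$ on $\mathcal{R}(K)$ by pulling back $\deg$ via $\sigma \mapsto \chi \otimes \sigma \in \mathcal{R}(\widetilde{K})$; then check this is well-defined and compute it explicitly in terms of highest weights, recovering the formula promised in the next subsection. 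Third, prove the non-vanishing of the theta lift: for the chosen $(G, G')$ one needs that $\mathrm{Hom}_{\mathfrak{g}, \widetilde{K}}(\mathcal{F}, V_\pi) \neq 0$ for every irreducible $\pi$. Fourth, assemble: apply Theorem \ref{thm:deg-parity} to conclude all occurring $\widetilde{K}$-types have a common degree-parity, hence common $\varepsilon$.

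I expect the main obstacle to be step three, the non-vanishing of $\Theta(\pi)$ — in general theta lifts can vanish, so one cannot use an arbitrary $G'$. The fix is to choose $G'$ large enough (in the "stable range" or at least large enough that every irreducible representation of $G$ participates in the correspondence with that particular $G'$). Concretely, one takes $G'$ to be a classical group of the same type as the centralizer construction but of sufficiently high rank, so that the dual pair $(G, G')$ is in the stable range with $G$ the smaller member; in that range it is classical (Howe, Li) that $\Theta(\pi) \neq 0$ for all irreducible $\pi$. Verifying that such a $G'$ exists in $\mathbf{Sp}$ with the correct intersection $\mathbf{U} \cap G = K$ for each entry of the table, and that the stable-range hypothesis is genuinely met, is the technical heart; once it is in place the rest is bookkeeping about metaplectic covers and the explicit parity formula.
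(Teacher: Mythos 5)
Your proposal takes essentially the same route as the paper: reduce to Theorem~\ref{thm:deg-parity} by choosing a companion $G'$ so that $(G,G')$ is in the stable range (or a type~II $GL$ pair with $G$ smaller) to guarantee $\Theta(\pi)\neq 0$, ensure $\widetilde{G}\to G$ splits so that $K$-types correspond to genuine $\widetilde{K}$-types, and match $\deg$ with the intrinsic parity $\varepsilon$ modulo~$2$. These are precisely conditions (1)--(3) in the paper's proof via Proposition~\ref{Prop:non-v-deg-parity}; the only stylistic difference is that you propose to \emph{define} $\varepsilon$ as $\deg\bmod 2$ and then check it is independent of the auxiliary $G'$, whereas the paper defines $\varepsilon$ intrinsically by highest weight and imposes extra congruence conditions on $G'$ (e.g.\ $4\mid p-q$ for $Sp_{2n}(\mathbb{R})$ paired with $O(p,q)$, or $4\mid r-s$ for $U(p,q)$ paired with $U(r,s)$) to force $\deg\equiv\varepsilon$ — an equivalent bookkeeping that you correctly flag as needing verification.
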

\begin{center}

\begin{tabular}{c|c|c|c|c|c|c|c}
\hline 
$G$ & $K$ & $\quad$ & $G$ & $K$ & $\quad$ & $G$ & $K$\tabularnewline
\cline{1-2} \cline{4-5} \cline{7-8} 
$GL_{m}(\mathbb{R})$ & $O(m)$ &  & $O_{p}(\mathbb{C})$ & $O(p)$  &  & $Sp_{2n}(\mathbb{C})$ & $Sp(n)$\tabularnewline
$GL_{m}(\mathbb{C})$ & $U(m)$ &  & $O(p,q)$ & $O(p)\times O(q)$ &  & $Sp_{2n}(\mathbb{R})$ & $U(n)$\tabularnewline
$GL_{m}(\mathbb{H})$ & $Sp(m)$ &  & $Sp(p,q)$ & $Sp(p)\times Sp(q)$ &  & $O^{*}(2n)$ & $U(n)$\tabularnewline
 &  &  & $U(p,q)$ & $U(p)\times U(q)$ &  &  & \tabularnewline
\hline 
\end{tabular}

\end{center}

\subsection{Parametrization and parity for $K$-types}

For $K=U(n)$, $T=U(1)^{n}=\mathrm{diag}(U(1),\dots,U(1))$ is a maximal
torus, with the standard system of positive roots $\Delta^{+}(\mathfrak{t},\mathfrak{k})=\{e_{i}-e_{j}\mid1\leqslant i<j\leqslant n\}$.
Write $\mathfrak{t}_{0}$ for the real Lie algebra of $T$. Write
each weight in $\sqrt{-1}\mathfrak{t}_{0}^{*}$ as the $n$-tuple
of coefficients under the basis $e_{1}$, $\dots$, $e_{n}$. Then
a $U(n)$-type is parametrized by its highest weight $(a_{1},\dots,a_{n})$
with integers $a_{1}\geqslant a_{2}\geqslant\cdots\geqslant a_{n}$.

For $K=Sp(n)$, $T=Sp(1)^{n}=\mathrm{diag}(Sp(1),\dots,Sp(1))$ is
a maximal torus, with the standard system of positive roots $\Delta^{+}(\mathfrak{t},\mathfrak{k})=\{e_{i}\pm e_{j}\mid1\leqslant i<j\leqslant n\}\cup\{2e_{i}\mid1\leqslant i\leqslant n\}$.
Write each weight in $\sqrt{-1}\mathfrak{t}_{0}^{*}$ as the $n$-tuple
of coefficients under the basis $e_{1}$, $\dots$, $e_{n}$. Then
a $Sp(n)$-type is parametrized by its highest weight $(a_{1},\dots,a_{n})$
with integers $a_{1}\geqslant a_{2}\geqslant\cdots\geqslant a_{n}\geqslant0$.

\begin{lem}
[\cite{Weyl1997classical}] \label{lem:sign} Embed $O(n)$ in $U(n)$
as $O(n)=U(n)\cap GL(n,\mathbb{R})$. There is a bijection $\mathcal{R}(O(n))\stackrel{\simeq}{\longleftrightarrow}\{(b_{1},b_{2},\dots,b_{r},\underbrace{1,\dots,1}_{s},\underbrace{0,\dots,0}_{n-r-s})\in\mathcal{R}(U(n)):2r+s\leqslant n,\ b_{r}\geqslant2\}$,
such that an $O(n)$-type $\sigma$ corresponds to a $U(n)$-type
$\lambda$ if and only if the highest weight vectors of $\lambda$
generate an $O(n)$-module equivalent to $\sigma$. Therefore, an
$O(n)$-type $\sigma$ can be parametrized as follows (with $[t]$
the greatest integer less than or equal to $t$):
\[
\begin{cases}
(b_{1},b_{2},\dots,b_{r},\underbrace{1,\dots,1}_{s},\underbrace{0,\dots,0}_{[\frac{n}{2}]-r-s};+1) & \text{if }r+s\leqslant\frac{n}{2},\\
(b_{1},b_{2},\dots,b_{r},\underbrace{1,\dots,1}_{n-2r-s},\underbrace{0,\dots,0}_{[\frac{n}{2}]-n+r+s};-1) & \text{if }\frac{n}{2}\leqslant r+s\leqslant n-r.
\end{cases}
\]
\end{lem}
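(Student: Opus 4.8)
The lemma is classical---it is essentially Weyl's parametrization \cite{Weyl1997classical}---and the plan is to recover it by combining restriction from $U(n)$ to $O(n)$ with the elementary Clifford theory of the index-two subgroup $SO(n)$ of $O(n)$, and then translating the answer into partitions. For the first half I would work inside tensor powers of the standard module. Since $\mathbb{C}^{n}$ is a faithful self-dual $O(n)$-module, every irreducible $O(n)$-module occurs in some $(\mathbb{C}^{n})^{\otimes d}$, so I would invoke the classical description (Weyl) of the $O(n)$-constituents of $(\mathbb{C}^{n})^{\otimes d}$: they are the modules $E_{\mu}$ indexed by partitions $\mu$ with $\mu_{1}'+\mu_{2}'\leqslant n$ (the sum of the lengths of the first two columns of $\mu$), where $E_{\mu}$ is the irreducible $O(n)$-submodule of the Schur module $F_{\mu}\subseteq(\mathbb{C}^{n})^{\otimes|\mu|}$ generated by a $U(n)$-highest-weight vector $v_{\mu}$. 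Littlewood's restriction rule expresses $F_{\lambda}|_{O(n)}$ as $E_{\lambda}$ plus a sum of $E_{\mu}$ with $|\mu|<|\lambda|$, with $E_{\lambda}$ of multiplicity one; hence $E_{\lambda}$ is an invariant of the equivalence class of $F_{\lambda}$, independent of the model and of $v_{\lambda}$. This is exactly the correspondence $\sigma\leftrightarrow\lambda$ of the lemma, and these facts already give a surjection from $\{\lambda:\lambda_{1}'+\lambda_{2}'\leqslant n\}$ onto $\mathcal{R}(O(n))$.

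Next I would unwind the parameters. Writing $\lambda=(b_{1},\dots,b_{r},1^{s})$ with $b_{r}\geqslant2$, one has $\lambda_{1}'=r+s$ and $\lambda_{2}'=r$, so $\lambda_{1}'+\lambda_{2}'\leqslant n\iff 2r+s\leqslant n$, which is precisely the displayed subset of $\mathcal{R}(U(n))$. To pass to the $(\mathrm{weight},\pm1)$-description I would split on the parity of $n$ and use Clifford theory for $SO(n)\subset O(n)$: the non-trivial coset acts on $\mathcal{R}(SO(n))$ by conjugation by a reflection---trivially when $n$ is odd, and by $(a_{1},\dots,a_{m-1},a_{m})\mapsto(a_{1},\dots,a_{m-1},-a_{m})$ when $n=2m$---an $SO(n)$-type fixed by this action has exactly two extensions to $O(n)$ interchanged by $\otimes\det$, while a non-fixed pair induces to a single $O(n)$-type isomorphic to its own $\det$-twist. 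On the partition side, the operation $\lambda\mapsto\lambda^{\ast}$ replacing the first column of $\lambda$, of length $\ell=\lambda_{1}'$, by a column of length $n-\ell$ (all other columns unchanged) is an involution of the admissible partitions realizing $\otimes\det$, and $\lambda^{\ast}=(b_{1},\dots,b_{r},1^{\,n-2r-s})$. Matching this against the highest-weight/sign parametrization of $\mathcal{R}(O(n))$ yields: if $\lambda$ has at most $n/2$ rows ($r+s\leqslant n/2$), then $E_{\lambda}$ has sign $+1$ and $SO(n)$-part $\lambda$ padded by zeros to length $[n/2]$; if $\lambda$ has at least $n/2$ rows ($n/2\leqslant r+s$), then $E_{\lambda}$ has sign $-1$ and $SO(n)$-part $\lambda^{\ast}$, which then has at most $n/2$ rows---these are the two cases of the displayed formula. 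Bijectivity then follows from the facts that $\lambda\mapsto\lambda^{\ast}$ is a bijection between the admissible partitions with more than $n/2$ rows and those with fewer, and, for $n$ even, that admissible partitions with exactly $n/2$ rows correspond bijectively to the $O(n)$-types $\sigma$ with $\sigma\simeq\sigma\otimes\det$.

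I expect no conceptual obstacle here: Weyl's irreducibility/admissibility criterion, Littlewood's restriction rule, and the two-line Clifford theory are all standard. The real work is the bookkeeping in the second step---keeping the conjugate-partition language, the associate involution $\lambda\mapsto\lambda^{\ast}$, the $\det$-twist, and the truncation to an $[n/2]$-tuple mutually consistent---and in particular handling the overlap case $r+s=n/2$, where (for $n$ even) $\lambda=\lambda^{\ast}$, the last coordinate of the $SO(n)$-weight is non-zero, the corresponding $O(n)$-type is $\det$-invariant, and the sign carries no information, so that one must check that no $O(n)$-type is counted twice there.
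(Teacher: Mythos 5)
The paper does not prove this lemma; it states it as a classical result and cites Weyl \cite{Weyl1997classical}, so there is no in-paper argument to compare against. Your sketch correctly reconstructs the standard route behind that citation: Weyl's tensor-power construction of the irreducibles $E_{\mu}$ indexed by partitions with $\mu_1'+\mu_2'\leqslant n$, the (multiplicity-one, top-degree) occurrence of $E_{\lambda}$ inside $F_{\lambda}|_{O(n)}$ via Littlewood restriction, Clifford theory for $SO(n)\subset O(n)$, and the associate involution $\lambda\mapsto\lambda^{\ast}$ realizing the $\det$-twist. The bookkeeping in $(b_1,\dots,b_r,1^s)$-coordinates is consistent: $\lambda_1'+\lambda_2'\leqslant n\iff 2r+s\leqslant n$, the associate is $\lambda^{\ast}=(b_1,\dots,b_r,1^{\,n-2r-s})$, and in the boundary case $r+s=n/2$ one has $\lambda=\lambda^{\ast}$, the last $SO(n)$-coordinate nonzero, and $E_{\lambda}\simeq E_{\lambda}\otimes\det$, matching the paper's remark that the two cases coincide there. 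The one place you should tighten is the step ``matching this against the highest-weight/sign parametrization of $\mathcal{R}(O(n))$'': that parametrization is precisely what the displayed formula is defining, so you need to fix a convention for $\epsilon$ independently (e.g., $\epsilon=+1$ exactly when the $U(n)$-highest-weight vector generating $\sigma$ has first column of length $\leqslant n/2$, equivalently when $\sigma$ is the constituent of smaller degree among the pair $\{\sigma,\sigma\otimes\det\}$) and then verify the formula from Clifford theory, rather than appeal to a pre-existing sign convention. With that clarification the sketch is sound and is in line with the cited classical reference.
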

\begin{rem*}
When $r+s=\frac{n}{2}$, these two cases coincide and give the same
$\sigma$.
\end{rem*}
An $O(n)$-type is parametrized as $(\underbrace{a_{1},a_{2},\dots,a_{x},0,\dots,0}_{[\frac{n}{2}]};\epsilon)$
with integers $a_{1}\geqslant a_{2}\geqslant\cdots\geqslant a_{x}\geqslant1$,
$\epsilon\in\{\pm1\}$, and corresponding $U(p)$-type $(a_{1},a_{2},\dots,a_{x},\underbrace{1,\dots,1}_{\frac{1-\epsilon}{2}(n-2x)},0,\dots,0).$
When $n$ is even and $n=2x$, the two choices of $\epsilon\in\{\pm1\}$
give the same $O(n)$-type.

Define the \emph{parity} $\varepsilon:\mathcal{R}(K)\to\mathbb{Z}/2\mathbb{Z}$
for $K=U(n)$, $Sp(n)$ or $O(n)$ as:

\medskip{}\noindent\resizebox{\textwidth}{!}{%

\begin{tabular}{c|c|c}
\hline 
$K$ & a $K$-type $\sigma$ is parametrized as & parity $\varepsilon(\sigma)\in\mathbb{Z}/2\mathbb{Z}$\tabularnewline
\hline 
\hline 
\multirow{1}{*}{$U(n)$} & $(a_{1},\dots,a_{n})$ with integers $a_{1}\geqslant a_{2}\geqslant\cdots\geqslant a_{n}$  & \multirow{2}{*}{$\sum_{i=1}^{n}a_{i}$ $(\mathrm{mod\ }2)$}\tabularnewline
\cline{1-2} 
\multirow{1}{*}{$Sp(n)$} & $(a_{1},\dots,a_{n})$ with integers $a_{1}\geqslant a_{2}\geqslant\cdots\geqslant a_{n}\geqslant0$ & \tabularnewline
\hline 
\multirow{2}{*}{$O(n)$} & $(a_{1},\dots,a_{[\frac{n}{2}]};\epsilon)$ with integers  & $\sum_{i=1}^{[\frac{n}{2}]}a_{i}+\frac{1-\epsilon}{2}\cdot n$\tabularnewline
 & $a_{1}\geqslant a_{2}\geqslant\cdots\geqslant a_{[\frac{n}{2}]}\geqslant0$,
and $\epsilon\in\{\pm1\}$ & $(\mathrm{mod\ }2)$\tabularnewline
\hline 
\end{tabular}

}\medskip{}
\begin{rem*}
The parity of an $O(n)$-type is the same as that of its corresponding
$U(p)$-type.
\end{rem*}
For $K=K_{1}\times\cdots\times K_{r}$ with $K_{i}=U(n_{i})$, $Sp(n_{i})$
or $O(n_{i})$, as $\mathcal{R}(K)=\{\bigotimes_{i=1}^{r}\sigma_{i}:\sigma_{i}\in\mathcal{R}(K_{i})\}$,
define the parity $\varepsilon:\mathcal{R}(K)\to\mathbb{Z}/2\mathbb{Z}$
by $\varepsilon(\bigotimes_{i=1}^{r}\sigma_{i})=\sum_{i=1}^{r}\varepsilon(\sigma_{i})\in\mathbb{Z}/2\mathbb{Z}$.

\subsection{Non-vanishing and splitting conditions}

To prove Theorem \ref{thm:Main}, we recall the non-vanishing and
splitting conditions for the local theta correspondence over $\mathbb{R}$.

Let $W$ be a real symplectic vector space. A reductive dual pair
$(G,G')$ in $Sp(W)$ is called \emph{irreducible} if $G\cdot G'$
acts irreducibly on $W$. Each reductive dual pair $(G,G')$ in $Sp(W)$
can be decomposed into a direct sum of irreducible pairs, namely,
there is an orthogonal direct sum decomposition $W=\bigoplus_{i=1}^{k}W_{i}$
such that $G\cdot G'$ acts irreducibly on $W_{i}$, and the restrictions
of actions of $(G,G')$ to $W_{i}$ define irreducible reductive dual
pairs $(G_{i},G_{i}')$ in $Sp(W_{i})$. Indeed, $G=G_{1}\times\cdots\times G_{k}$
and $G'=G_{1}'\times\cdots\times G_{k}'$. All irreducible real reductive
dual pairs are classified in the following table (c.f. \cite{Howe1979theta}).

\medskip{}
\noindent\resizebox{\textwidth}{!}{%

\begin{tabular}{c|c|c|c|c}
\hline 
Type I & $\mathbf{Sp}$ & \multirow{5}{*}{} & Type II & $\mathbf{Sp}$\tabularnewline
\cline{1-2} \cline{4-5} 
$(O(p,q),Sp(2n,\mathbb{R}))$ & $Sp(2n(p+q),\mathbb{R})$ &  & $(GL_{m}(\mathbb{R}),GL_{n}(\mathbb{R}))$ & $Sp(2mn,\mathbb{R})$\tabularnewline
$(O_{p}(\mathbb{C}),Sp(2n,\mathbb{C}))$ & $Sp(4pn,\mathbb{R})$ &  & \multirow{1}{*}{$(GL_{m}(\mathbb{C}),GL_{n}(\mathbb{C}))$} & \multirow{1}{*}{$Sp(4mn,\mathbb{R})$}\tabularnewline
$(Sp(p,q),O^{*}(2n))$ & $Sp(4n(p+q),\mathbb{R})$ &  & $(GL_{m}(\mathbb{H}),GL_{n}(\mathbb{H}))$ & $Sp(8mn,\mathbb{R})$\tabularnewline
$(U(p,q),U(r,s))$ & $Sp(2(p+q)(r+s),\mathbb{R})$ &  &  & \tabularnewline
\hline 
\end{tabular}

}\medskip{}

An irreducible real reductive dual pair $(G,G')$ of type I is said
to be \emph{in the stable range} with $G$ the smaller member if the
defining module of $G'$ contains an isotropic subspace of the same
dimension as that of the defining module of $G$. All irreducible
real reductive dual pairs $(G_{1},G_{2})$ in the stable range are
listed in following table.

\medskip{}\begin{center}

\begin{tabular}{c|c|c|c}
\hline 
\multirow{1}{*}{$G_{1}$} & \multirow{1}{*}{$G_{2}$} & with $G_{1}$ smaller & with $G_{2}$ smaller\tabularnewline
\hline 
\hline 
$O_{p}(\mathbb{C})$ & $Sp_{2n}(\mathbb{C})$ & $n\geqslant p$ & $p\geqslant4n$\tabularnewline
\hline 
$O(p,q)$ & $Sp_{2n}(\mathbb{R})$ & $n\geqslant p+q$ & $p,q\geqslant2n$\tabularnewline
\hline 
$Sp(p,q)$ & $O^{*}(2n)$ & $n\geqslant2(p+q)$ & $p,q\geqslant n$\tabularnewline
\hline 
$U(p,q)$ & $U(r,s)$ & $r,s\geqslant p+q$ & $p,q\geqslant r+s$\tabularnewline
\hline 
\end{tabular}

\end{center}\medskip{}

Write the two elements of $\mathrm{Ker}(\widetilde{\mathbf{Sp}}\to\mathbf{Sp})=\mu_{2}$
as $e$ and $-e$, such that $e=(-e)^{2}$ is the identity element.
A $(\mathfrak{g},\widetilde{K})$-module is called \emph{genuine}
if $-e$ acts on it as the scalar multiplication by $-1$. Clearly,
an irreducible $(\mathfrak{g},\widetilde{K})$-module $\pi$ with
a non-zero theta lift must be genuine, since $\omega(-e)$ acts on
$\mathcal{F}$ by the scalar $-1$. Conversely, two lemmas hold:
\begin{lem}
[{Non-vanishing of theta liftings in the stable range \cite{ProtsakPrzebinda2008occurrence}}]\label{lem:non-van-I}
If $(G,G')$ is an irreducible real reductive dual pair of type I
in the stable range with $G$ the smaller member, then each genuine
irreducible $(\mathfrak{g},\widetilde{K})$-module has a non-zero
theta lift.
\end{lem}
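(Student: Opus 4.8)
The plan is to recast the statement as the non-vanishing of a single $\mathrm{Hom}$-space and then to witness it by integrating a matrix coefficient of $\pi$ over a closed $G$-orbit in a model of $\omega$ adapted to $G$; the stable-range hypothesis is exactly what makes that integral converge. By Lemma~\ref{lem:HoweDual} and the remarks after it, $\theta(\pi)\neq0$ is equivalent to $\Theta(\pi)\neq0$, hence to $\mathrm{Hom}_{\mathfrak{g},\widetilde{K}}(\mathcal{F},V_{\pi})\neq0$, i.e.\ to $\pi$ occurring as a $(\mathfrak{g},\widetilde{K})$-quotient of $\mathcal{F}$. Since $\omega(-e)=-1$ on $\mathcal{F}$, genuineness of $\pi$ is clearly necessary, and the content is that in the stable range it is also sufficient. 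Passing to smooth Casselman--Wallach globalizations, it is enough---granting the compatibility of $\Theta$ with smooth globalization, to which I return below---to produce a non-zero continuous $\widetilde{G}$-map from the smooth oscillator representation $\omega^{\infty}$ to $\pi^{\infty}$.

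For that I would use the mixed Schr\"odinger model attached to the smaller member $G$. Writing $V$, $V'$ for the defining modules of $G$, $G'$, the ambient symplectic space is built on $V\otimes V'$; polarizing $V'$ by a maximal isotropic subspace produces a $G$-stable complete polarization of $V\otimes V'$, so that $G$ sits inside the Levi of a Siegel-type parabolic of $\mathbf{Sp}$ and $\omega|_{\widetilde{G}}$ is realized geometrically on a Schwartz space $\mathcal{S}(X)$, with $X$ a real form of a space of rectangular matrices and $G\hookrightarrow GL(V)$ acting by left translation up to a genuine factor $\mu$. The hypothesis ``stable range, $G$ the smaller member'' is precisely the condition that these matrices are wide enough for a generic $x_{0}\in X$ to have a right inverse---this is what the stable-range table above records as explicit inequalities---and then the $G$-orbit of $x_{0}$ is closed (a level set of a Gram-type map), the stabilizer of $x_{0}$ in $G$ is trivial, and the orbit map $g\mapsto gx_{0}$ is proper with $\|gx_{0}\|_{X}\asymp\|g\|_{G}$. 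Fixing $x_{0}$, a vector $v\in\pi^{\infty}$ and a functional $\lambda$ with $\lambda(v)\neq0$, I would set
\[
\Phi(f)=\int_{\widetilde{G}}f(g^{-1}x_{0})\,\mu(g)\,\pi^{\infty}(g)v\;dg\in\pi^{\infty}\qquad(f\in\mathcal{S}(X)).
\]
This converges absolutely in the Fr\'echet space $\pi^{\infty}$ because, by Casselman--Wallach, the seminorms of $\pi^{\infty}(g)v$ grow at most like $\|g\|_{G}^{N}$, whereas $|f(g^{-1}x_{0})|\lesssim_{M}\|g\|_{G}^{-M}$ by coercivity of the orbit map, and $\|g\|_{G}^{N-M}$ is integrable over $G$ once $M$ is large. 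A short equivariance check---in which the genuineness of $\pi$ is what lets $\mu$ cancel against the central character---shows $\Phi$ is a $\widetilde{G}$-map, and $\Phi\neq0$ upon choosing $f$ a suitable bump near $x_{0}$. This is essentially J.-S.~Li's stable-range construction of theta lifts, pushed to non-unitary $\pi$ by using moderate growth in place of $L^{2}$-estimates.

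The step I expect to be the main obstacle is the descent from this analytic statement back to the algebraic one in Lemma~\ref{lem:HoweDual}: as a $(\mathfrak{g},\widetilde{K})$-module, $\mathcal{F}$ is \emph{not} admissible---its $\widetilde{K}$-types acquire infinite multiplicity once $G$ is much smaller than $\mathbf{Sp}$---so the Casselman--Wallach equivalence does not by itself convert $\Phi$ into a $(\mathfrak{g},\widetilde{K})$-map $\mathcal{F}\to V_{\pi}$. I would close this gap either by quoting the known compatibility of the theta correspondence with smooth globalization (the Harish-Chandra module of $(\omega^{\infty}\,\widehat{\otimes}\,\overline{\pi^{\infty}})_{\widetilde{G}}$ is $\Theta(\pi)$, so $\Phi\neq0$ forces $\Theta(\pi)\neq0$), or by staying within Harish-Chandra modules and reducing to the unitary case: via the Langlands classification write $\pi$ as a quotient of a standard module $\mathrm{Ind}_{\widetilde{P}}^{\widetilde{G}}(\tau)$ with $\tau$ essentially tempered, apply Frobenius reciprocity to reduce $\mathrm{Hom}_{\mathfrak{g},\widetilde{K}}(\mathcal{F},\mathrm{Ind}_{\widetilde{P}}^{\widetilde{G}}\tau)$ to the Jacquet module of $\mathcal{F}$, identify its relevant layer (Kudla's filtration) with the oscillator representation of a smaller dual pair still in the stable range, and induct on rank, with Li's non-vanishing for tempered (unitary) representations as the base case. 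In either route the two genuinely load-bearing points are this smooth-versus-algebraic bookkeeping and the verification, uniformly across the four families of type I pairs, of the coercivity estimate $\|gx_{0}\|_{X}\asymp\|g\|_{G}$---equivalently of the persistence of the stable-range inequality under Kudla's filtration.
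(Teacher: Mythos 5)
The paper does not give a proof of this lemma; it is quoted directly from Protsak--Przebinda \cite{ProtsakPrzebinda2008occurrence}, so there is no in-paper argument to compare against. Your sketch is a self-contained account of one known route to the result: J.-S.~Li's stable-range construction, with the $L^2$/unitarity estimates replaced by Casselman--Wallach moderate-growth bounds so as to handle arbitrary irreducible admissible $\pi$. In outline this is sound, and you correctly identify the two load-bearing points (the coercivity estimate $\|gx_{0}\|_{X}\asymp\|g\|_{G}$ on the free closed orbit, and the passage between the smooth globalization and the algebraic Harish-Chandra picture). A few remarks on the details.

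First, the ``descent'' problem you flag is real but less severe than you suggest. The Fock space $\mathcal{F}$ of $\widetilde{\mathbf{U}}$-finite vectors sits as a dense $(\mathfrak{g},\widetilde{K})$-submodule of $\omega^{\infty}\simeq\mathcal{S}(X)$ (its elements are a fortiori $\widetilde{K}$-finite), and a non-zero continuous $\widetilde{G}$-map $\Phi:\mathcal{S}(X)\to\pi^{\infty}$ automatically restricts to a non-zero $(\mathfrak{g},\widetilde{K})$-map $\mathcal{F}\to V_{\pi}$ (it sends $\widetilde{K}$-finite vectors to $\widetilde{K}$-finite vectors, and the restriction is non-zero by density). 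Non-admissibility of $\mathcal{F}$ over $(\mathfrak{g},\widetilde{K})$ does not obstruct this step; it only means $\Phi|_{\mathcal{F}}$ is not determined by its effect on individual $\widetilde{K}$-isotypic subspaces. So you do not need the Langlands/Kudla-filtration detour you propose as a fallback. Second, the equivariance bookkeeping in your display is off by an inverse: with the convention $\omega(h)f(x)=\mu(h)f(h^{-1}x)$ the integrand should read $f(gx_{0})\,\mu(g)^{-1}\,\pi^{\infty}(g)v$ (otherwise $\Phi$ intertwines $\omega$ with the contragredient action). Third, ``trivial stabilizer'' and ``closed orbit'' require that $x_{0}$ be chosen with non-degenerate Gram matrix, not merely full rank; this is a generic condition and is available precisely because the stable-range inequality gives enough columns, but it should be said. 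Finally, while this Li-style analytic proof is perfectly legitimate, it is worth noting that it is not obviously the argument of \cite{ProtsakPrzebinda2008occurrence}, which the present paper relies on; a reader checking your proposal against that reference should not expect a line-by-line match.
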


\begin{lem}
[{\cite{Moeglin1989correspondance,AdamsBarbasch1995reductive,LiPaulTanZhu2003explicit}}]\label{lem:non-van-II}
For $(G,G')=(GL_{m}(F),GL_{n}(F))$ with $F=\mathbb{R}$, $\mathbb{C}$
or $\mathbb{H}$, if $n\geqslant m$, then each genuine irreducible
$(\mathfrak{g},\widetilde{K})$-module has a non-zero theta lift.
\end{lem}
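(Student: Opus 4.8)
The plan is to follow \cite{Moeglin1989correspondance,AdamsBarbasch1995reductive,LiPaulTanZhu2003explicit}: reduce to the Langlands building blocks by exploiting compatibility of theta lifting with parabolic induction, and then dispose of those building blocks by an explicit Fock-model computation. First I would pass to the linear groups. Since $(GL_m(F),GL_n(F))$ is of type II, the metaplectic cover splits over each factor, so after fixing the genuine character of $\mu_2$ a genuine irreducible $(\mathfrak g,\widetilde K)$-module of $\widetilde{GL_m(F)}$ is just an irreducible $(\mathfrak g,K)$-module $\pi$ of the linear group $GL_m(F)$, and its theta lift to $\widetilde{GL_n(F)}$ is a genuine character of $\widetilde{GL_n(F)}$ tensored with the linear theta lift of $\pi$ to $GL_n(F)$. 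Thus it suffices to show: for every $n\geq m$ and every irreducible $(\mathfrak g,K)$-module $\pi$ of $GL_m(F)$, the linear theta lift $\Theta_n(\pi)$ to $GL_n(F)$ is non-zero.

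Next I would reduce to the building blocks. By the Langlands (or Zelevinsky) classification, $\pi$ is the Langlands quotient of a module $\mathrm{Ind}_P^{GL_m(F)}(\delta_1\otimes\cdots\otimes\delta_r)$ parabolically induced from twists $\delta_i$ of essentially square-integrable representations of a Levi $M=GL_{m_1}(F)\times\cdots\times GL_{m_r}(F)$ with $\sum m_i=m$, and over $F=\mathbb R,\mathbb C,\mathbb H$ the groups carrying essentially square-integrable representations are only the small ones $GL_1(F)$ and---for $F=\mathbb R$---$GL_2(\mathbb R)$, so every $m_i\leq 2$. Using a mixed model of $\omega$ adapted to $P$ together with a Kudla-type analysis of the Jacquet modules (equivalently the $\mathfrak n$-homology) of the oscillator representation, theta lifting is compatible with parabolic induction: for any $n=n_1+\cdots+n_r$ with each $n_i\geq m_i$---a partition that exists precisely because $n\geq m=\sum m_i$---the lift of the induced module to $GL_n(F)$ realizes, after passing to the relevant Jacquet module and matching leading exponents, a parabolically induced module $\mathrm{Ind}(\Theta_{n_1}(\delta_1)\otimes\cdots\otimes\Theta_{n_r}(\delta_r))$ assembled from the lifts of the factors. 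Tracking the Langlands datum through this induction then reduces the non-vanishing of $\Theta_n(\pi)$ to the non-vanishing of each $\Theta_{n_i}(\delta_i)$, i.e. to the building blocks.

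Finally I would settle the building blocks by direct computation in the Fock model $\mathcal F=\mathrm{Poly}(\mathbb C^{mn[F:\mathbb R]})$: the lift of a (twisted) character of $GL_1(F)=F^{\times}$ to $GL_n(F)$, $n\geq 1$, is a non-zero (generally reducible) degenerate principal series, visible already on a lowest-degree homogeneous component of $\mathcal F$; and the lift of an essentially discrete series of $GL_2(\mathbb R)$ to $GL_n(\mathbb R)$, $n\geq 2$, is non-zero by a low-rank computation. Feeding these back through the parabolic-induction step gives $\Theta_n(\pi)\neq 0$ for all $n\geq m$, which, by the first paragraph, proves the lemma. I expect the main obstacle to be the archimedean parabolic-induction step: unlike in the $p$-adic case one cannot argue purely combinatorially, and one must control the asymptotics of matrix coefficients (equivalently, the $\mathfrak n$-homology of $\omega$) finely enough to be certain that the induced module above survives in the Langlands quotient's lift $\Theta_n(\pi)$, and is not destroyed when passing from the induced standard module to its irreducible quotient $\pi$---this case-by-case analytic bookkeeping over $F=\mathbb R,\mathbb C,\mathbb H$ is the real content of the cited works.
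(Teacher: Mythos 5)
The paper does not give a proof of this lemma---it is quoted as a known result with references to Moeglin, Adams--Barbasch, and Li--Paul--Tan--Zhu---so you are comparing against the cited literature rather than an in-text argument. Your outline does match the general strategy used there: reduce genuine representations of the cover to linear ones, pass through the Langlands classification to essentially square-integrable building blocks, use a mixed-model/Jacquet-module compatibility of theta lifting with parabolic induction, and finish the building blocks by a direct Fock-model computation. You are also right that the archimedean bookkeeping needed to make the parabolic-induction step survive passage from the standard module to its irreducible Langlands quotient is where the substance of the cited works lies.

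Two concrete inaccuracies in the outline would need to be repaired in a full write-up. First, it is not true that the metaplectic cover splits over each factor of every type II pair: as the paper's own splitting table records, $\widetilde{GL_m(\mathbb R)}\to GL_m(\mathbb R)$ inside $\widetilde{Sp_{2mn}(\mathbb R)}$ splits only when $n$ is even. What one actually uses is the weaker, always-available fact that $\widetilde{GL_m(F)}$ admits a genuine character, so that twisting by it gives a bijection between genuine irreducible $(\mathfrak g,\widetilde K)$-modules of the cover and irreducible $(\mathfrak g,K)$-modules of $GL_m(F)$; the reduction should be phrased via that character rather than via a splitting. Second, ``lift of a (twisted) character of $GL_1(F)=F^\times$'' does not exhaust the building blocks over $F=\mathbb H$: $GL_1(\mathbb H)=\mathbb H^\times\cong\mathbb R_{>0}\times SU(2)$ carries essentially square-integrable irreducibles of every finite dimension, not only characters, so the Fock-model computation for that block has to handle arbitrary $SU(2)$-types, not just a rank-one degenerate principal series. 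Neither point threatens the truth of the lemma, but an actual proof along your lines must account for both, in addition to the hard asymptotic analysis you already flag as the real content of the references.
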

Suppose that the covering $\widetilde{G}\to G$ \emph{splits},  namely,
there exists an embedding $G\hookrightarrow\widetilde{G}$ such that
the composition $G\hookrightarrow\widetilde{G}\to G$ is the identity
map on $G$. We may identify $G$ as a subgroup of $\widetilde{G}$
via this embedding. Then $\widetilde{G}=G\times\mu_{2}$ in the sense
that the two subgroups $G$ and $\mu_{2}=\mathrm{Ker}(\widetilde{G}\to G)$
commute, generate $\widetilde{G}$, and $G\cap\mu_{2}=\{e\}$. Similarly
we have $\widetilde{K}=K\times\mu_{2}$.     An irreducible
$(\mathfrak{g},K)$-module $\pi$ gives rise to a genuine irreducible
$(\mathfrak{g},\widetilde{K})$-module $\tilde{\pi}$, with the same
underlying space and actions of $(\mathfrak{g},K)$, while $\tilde{\pi}(-e)$
acts by the scalar $-1$. Similarly, a $K$-type $\sigma$ gives rise
to a genuine $\tilde{K}$-type $\tilde{\sigma}$, with the same underlying
space and actions of $K$, while $\tilde{\sigma}(-e)$ acts by the
scalar $-1$. Consider the actions of $K$ on the Fock model $\mathcal{F}$
via the embedding $K\hookrightarrow\widetilde{K}=K\times\mu_{2}$.
Clearly,  
\[
\sigma\in\mathcal{R}(K,\mathcal{F})\iff\tilde{\sigma}\in\mathcal{R}(\widetilde{K},\mathcal{F}).
\]
In that case we define the \emph{degree} $\mathrm{deg}(\sigma)$ for
$\sigma\in\mathcal{R}(K,\mathcal{F})$ by  $\deg(\sigma)=\deg(\tilde{\sigma})$.
\begin{prop}
\label{Prop:non-v-deg-parity}Let $(G,G')$ be an irreducible real
reductive dual pair, either in the stable range or of type II, with
$G$ the smaller member. Suppose that \textup{$\widetilde{G}\to G$}
splits over $G$. If $\pi$ is an irreducible $(\mathfrak{g},K)$-module,
and $\sigma_{1}$, $\sigma_{2}\in\mathcal{R}(K,\pi)$, then $\sigma_{1}$,
$\sigma_{2}\in\mathcal{R}(K,\mathcal{F})$ and $\mathrm{deg}(\sigma_{1})\equiv\mathrm{deg}(\sigma_{2})\ (\mathrm{mod}\ 2)$.\end{prop}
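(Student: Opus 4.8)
The plan is to deduce this directly from Theorem \ref{thm:deg-parity} by transporting everything from the linear picture $(\mathfrak{g},K)$ to the genuine metaplectic picture $(\mathfrak{g},\widetilde{K})$ via the splitting $\widetilde{G}=G\times\mu_{2}$. First I would attach to $\pi$ the genuine irreducible $(\mathfrak{g},\widetilde{K})$-module $\tilde{\pi}$ described above the proposition (same underlying space, same $(\mathfrak{g},K)$-action, with $-e$ acting by $-1$), and likewise the genuine $\widetilde{K}$-types $\tilde{\sigma}_{1}$, $\tilde{\sigma}_{2}$ attached to $\sigma_{1}$, $\sigma_{2}$. Since $\widetilde{K}=K\times\mu_{2}$, restriction along $K\hookrightarrow\widetilde{K}$ identifies genuine $\widetilde{K}$-types with $K$-types and preserves multiplicities in any genuine $(\mathfrak{g},\widetilde{K})$-module; in particular $\mathcal{R}(\widetilde{K},\tilde{\pi})=\{\tilde{\sigma}:\sigma\in\mathcal{R}(K,\pi)\}$, so $\tilde{\sigma}_{1}$, $\tilde{\sigma}_{2}\in\mathcal{R}(\widetilde{K},\tilde{\pi})$.

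Next I would check that $\tilde{\pi}$ has a non-zero theta lift. It is a genuine irreducible $(\mathfrak{g},\widetilde{K})$-module, so this follows from Lemma \ref{lem:non-van-I} when $(G,G')$ is an irreducible type I pair in the stable range with $G$ the smaller member, and from Lemma \ref{lem:non-van-II} when $(G,G')=(GL_{m}(F),GL_{n}(F))$ is of type II with $G$ the smaller member (so that $n\geqslant m$). This is exactly the step where the two standing hypotheses are used: the splitting of $\widetilde{G}\to G$ is what allows $\pi$ to be promoted to a genuine $\tilde{\pi}$ at all, and the "stable range or type II, $G$ smaller" condition is precisely what the non-vanishing lemmas require.

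Then I would apply Theorem \ref{thm:deg-parity} to $\tilde{\pi}$: since $\tilde{\pi}$ is an irreducible $(\mathfrak{g},\widetilde{K})$-module with a non-zero theta lift and $\tilde{\sigma}_{1}$, $\tilde{\sigma}_{2}\in\mathcal{R}(\widetilde{K},\tilde{\pi})$, the theorem gives $\tilde{\sigma}_{1}$, $\tilde{\sigma}_{2}\in\mathcal{R}(\widetilde{K},\mathcal{F})$ and $\deg(\tilde{\sigma}_{1})\equiv\deg(\tilde{\sigma}_{2})\ (\mathrm{mod}\ 2)$. Finally I would translate back using the displayed equivalence $\sigma\in\mathcal{R}(K,\mathcal{F})\iff\tilde{\sigma}\in\mathcal{R}(\widetilde{K},\mathcal{F})$ and the definition $\deg(\sigma)=\deg(\tilde{\sigma})$ recorded just before the proposition: this yields $\sigma_{1}$, $\sigma_{2}\in\mathcal{R}(K,\mathcal{F})$ and $\deg(\sigma_{1})\equiv\deg(\sigma_{2})\ (\mathrm{mod}\ 2)$, as claimed.

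I do not expect a genuine obstacle here; the argument is essentially bookkeeping about the two-fold cover. The one point deserving care is that the $K$-action on $\mathcal{F}$ obtained by restriction along $K\hookrightarrow\widetilde{K}=K\times\mu_{2}$ really does carry the same information as the genuine $\widetilde{K}$-action on $\mathcal{F}$ — this is fine because $\omega(-e)$ acts on $\mathcal{F}$ by $-1$, so $\mathcal{F}$ is genuine and the type-wise identification $\sigma\leftrightarrow\tilde{\sigma}$ together with the definition of $\deg(\sigma)$ applies verbatim; and one must confirm that "$G$ smaller'' in the type II case is read as $G=GL_{m}(F)$ with $m\leqslant n$, matching Lemma \ref{lem:non-van-II}.
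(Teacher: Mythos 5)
Your proposal is correct and follows the same route as the paper's own proof: promote $\pi$, $\sigma_1$, $\sigma_2$ to their genuine versions $\tilde{\pi}$, $\tilde{\sigma}_1$, $\tilde{\sigma}_2$ via the splitting $\widetilde{K}=K\times\mu_2$, invoke Lemma~\ref{lem:non-van-I} or~\ref{lem:non-van-II} to get $\theta(\tilde{\pi})\neq 0$, apply Theorem~\ref{thm:deg-parity}, and translate back using $\deg(\sigma)=\deg(\tilde{\sigma})$. The only difference is that you spell out the bookkeeping (the multiplicity-preserving identification of genuine $\widetilde{K}$-types with $K$-types, and the reading of ``$G$ smaller'' in the type II case) a bit more explicitly than the paper does.
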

\begin{proof}
As we said, $\pi$ gives rise to a genuine irreducible $(\mathfrak{g},\widetilde{K})$-module
$\tilde{\pi}$, while $\sigma_{1}$ and $\sigma_{2}\in\mathcal{R}(K,\pi)$
give rise to $\tilde{\sigma}_{1}$ and $\tilde{\sigma}_{2}\in\mathcal{R}(\widetilde{K},\widetilde{\pi})$.
By Lemma \ref{lem:non-van-I}, \ref{lem:non-van-II}, $\theta(\tilde{\pi})\neq0$.
By Theorem \ref{thm:deg-parity}, $\tilde{\sigma}_{1}$, $\tilde{\sigma}_{2}\in\mathcal{R}(\widetilde{K},\mathcal{F})$,
and $\mathrm{deg}(\tilde{\sigma}_{1})\equiv\mathrm{deg}(\tilde{\sigma}_{2})\ (\mathrm{mod}\ 2)$.
Equivalently, we have $\sigma_{1}$, $\sigma_{2}\in\mathcal{R}(K,\mathcal{F})$
and $\mathrm{deg}(\sigma_{1})\equiv\mathrm{deg}(\sigma_{2})\ (\mathrm{mod}\ 2)$.
\end{proof}
 For an irreducible real reductive dual pair $(G_{1},G_{2})$, The
following table gives some sufficient conditions for $\widetilde{G}_{i}\to G_{i}$
to split (c.f. \cite{AdamsBarbasch1995reductive,AdamsBarbasch1998genuine,Paul1998howe,Adams2007theta}).

\medskip{}\begin{center}

\begin{tabular}{c|c|c|c}
\hline 
\multirow{1}{*}{$G_{1}$} & \multirow{1}{*}{$G_{2}$} & $\widetilde{G}_{1}\to G_{1}$ splits & $\widetilde{G}_{2}\to G_{2}$ splits\tabularnewline
\hline 
\hline 
$O_{p}(\mathbb{C})$ & $Sp_{2n}(\mathbb{C})$ & \multicolumn{2}{c}{always}\tabularnewline
\hline 
$O(p,q)$ & $Sp_{2n}(\mathbb{R})$ & if $n$ is even & if $p+q$ is even\tabularnewline
\hline 
$Sp(p,q)$ & $O^{*}(2n)$ & \multicolumn{2}{c}{always}\tabularnewline
\hline 
$U(p,q)$ & $U(r,s)$ & if $r+s$ is even & if $p+q$ is even\tabularnewline
\hline 
$GL_{m}(\mathbb{R})$ & $GL_{n}(\mathbb{R})$ & if $n$ is even & if $m$ is even\tabularnewline
\hline 
$GL_{m}(\mathbb{C})$ & $GL_{n}(\mathbb{C})$ & \multicolumn{2}{c}{always}\tabularnewline
\hline 
$GL_{m}(\mathbb{H})$ & $GL_{n}(\mathbb{H})$ & \multicolumn{2}{c}{always}\tabularnewline
\hline 
\end{tabular}

\end{center}\medskip{}

Let $(G,G')$ be an irreducible real reductive dual pair with $\widetilde{G}\to G$
splitting. The following table lists $\deg(\sigma)$ explicitly for
$\sigma\in\mathcal{R}(K,\mathcal{F})$ (c.f. \cite{Moeglin1989correspondance,AdamsBarbasch1995reductive,Paul1998howe,LiPaulTanZhu2003explicit,Paul2005howe}).

\medskip{}\noindent\resizebox{\textwidth}{!}{%

\begin{tabular}{c|c|c|c|l}
\hline 
$G$ & $G'$ & $K$ & $\sigma\in\mathcal{R}(K,\mathcal{F})$ & $\qquad\qquad\mathrm{deg}(\sigma)$\tabularnewline
\hline 
\hline 
$O_{p}(\mathbb{C})$ & $Sp_{2n}(\mathbb{C})$ & $O(p)$ & $(a_{1},\dots,a_{[\frac{p}{2}]};\epsilon)$ & $\sum_{i=1}^{[\frac{p}{2}]}a_{i}+\frac{1-\epsilon}{2}(p-2|\{i:a_{i}>0\}|)$\tabularnewline
\hline 
$Sp_{2n}(\mathbb{C})$ & $O_{p}(\mathbb{C})$ & $Sp(n)$ & $(a_{1},\dots,a_{n})$ & $\sum_{i=1}^{n}a_{i}$\tabularnewline
\hline 
\multirow{2}{*}{$O(p,q)$} & \multirow{2}{*}{$Sp_{2n}(\mathbb{R})$} & $O(p)$ & $(a_{1},\dots,a_{[\frac{p}{2}]};\epsilon)$ & $\sum_{i=1}^{[\frac{p}{2}]}a_{i}+\frac{1-\epsilon}{2}(p-2|\{i:a_{i}>0\}|)$\tabularnewline
 &  & $\times O(q)$ & $\otimes(b_{1},\dots,b_{[\frac{q}{2}]};\eta)$ & $+\sum_{j=1}^{[\frac{q}{2}]}b_{j}+\frac{1-\eta}{2}(q-2|\{j:b_{j}>0\}|)$\tabularnewline
\hline 
$Sp_{2n}(\mathbb{R})$ & $O(p,q)$ & $U(n)$ & $(a_{1},\dots,a_{n})$ & $\sum_{i=1}^{n}|a_{i}-\frac{p-q}{2}|$\tabularnewline
\hline 
\multirow{2}{*}{$Sp(p,q)$} & \multirow{2}{*}{$O^{*}(2n)$} & $Sp(p)$ & $(a_{1},\dots,a_{p})$ & \multirow{2}{*}{$\sum_{i=1}^{p}a_{i}+\sum_{j=1}^{q}b_{j}$}\tabularnewline
 &  & $\times Sp(q)$ & $\otimes(b_{1},\dots,b_{q})$ & \tabularnewline
\hline 
$O^{*}(2n)$ & $Sp(p,q)$ & $U(n)$ & $(a_{1},\dots,a_{n})$ & $\sum_{i=1}^{n}|a_{i}-p+q|$\tabularnewline
\hline 
\multirow{2}{*}{$U(p,q)$} & \multirow{2}{*}{$U(r,s)$} & $U(p)$ & $(a_{1},\dots,a_{p})$ & \multirow{2}{*}{$\sum_{i=1}^{p}|a_{i}-\frac{r-s}{2}|$$+\sum_{j=1}^{q}|b_{j}+\frac{r-s}{2}|$}\tabularnewline
 &  & $\times U(q)$ & $\otimes(b_{1},\dots,b_{q})$ & \tabularnewline
\hline 
$GL_{m}(\mathbb{R})$ & $GL_{n}(\mathbb{R})$ & $O(m)$ & $(a_{1},\dots,a_{[\frac{m}{2}]};\epsilon)$ & $\sum_{i=1}^{[\frac{m}{2}]}a_{i}+\frac{1-\epsilon}{2}(m-2|\{i:a_{i}>0\}|)$\tabularnewline
\hline 
$GL_{m}(\mathbb{C})$ & $GL_{n}(\mathbb{C})$ & $U(m)$ & $(a_{1},\dots,a_{m})$ & $\sum_{i=1}^{m}|a_{i}|$\tabularnewline
\hline 
$GL_{m}(\mathbb{H})$ & $GL_{n}(\mathbb{H})$ & $Sp(m)$ & $(a_{1},\dots,a_{m})$ & $\sum_{i=1}^{m}a_{i}$\tabularnewline
\hline 
\end{tabular}

}\medskip{}

\subsection{Proof of Theorem \ref{thm:Main}}

By Proposition \ref{Prop:non-v-deg-parity}, it suffices to find
a suitable $G'$ such that $(G,G')$ is an irreducible real reductive
dual pair satisfying three conditions:
\begin{description}
\item [{(1)}] It is either in the stable range or of type II, with $G$
the smaller member.
\item [{(2)}] The covering $\widetilde{G}\to G$ splits.
\item [{(3)}] For $(G,G')$, the degree $\deg(\sigma)\equiv\varepsilon(\sigma)\ (\mathrm{mod}\ 2)$
for any $\sigma\in\mathcal{R}(K,\mathcal{F})$.
\end{description}
We can take $G'$ according to the following table, which lists explicit
sufficient conditions to ensure \textbf{(1)}, \textbf{(2)} and \textbf{(3)}.

\medskip{}\begin{center}

\begin{tabular}{c|c|c|c|c}
\hline 
$G$ & $G'$ & condition (1) & condition (2) & condition (3)\tabularnewline
\hline 
\hline 
$O_{p}(\mathbb{C})$ & $Sp_{2n}(\mathbb{C})$ & $n\geqslant p$ &  & \tabularnewline
\hline 
$Sp_{2n}(\mathbb{C})$ & $O_{p}(\mathbb{C})$ & $p\geqslant4n$ &  & \tabularnewline
\hline 
$O(p,q)$ & $Sp_{2n}(\mathbb{R})$ & $n\geqslant p+q$ & $2\mid n$ & \tabularnewline
\hline 
$Sp_{2n}(\mathbb{R})$ & $O(p,q)$ & $p,q\geqslant2n$ & $2\mid p+q$ & $4\mid p-q$\tabularnewline
\hline 
$Sp(p,q)$ & $O^{*}(2n)$ & $n\geqslant2(p+q)$ &  & \tabularnewline
\hline 
$O^{*}(2n)$ & $Sp(p,q)$ & $p,q\geqslant n$ &  & $2\mid p-q$\tabularnewline
\hline 
$U(p,q)$ & $U(r,s)$ & $r,s\geqslant p+q$ & $2\mid r+s$ & $4\mid r-s$\tabularnewline
\hline 
$GL_{m}(\mathbb{R})$ & $GL_{n}(\mathbb{R})$ & $n\geqslant m$ & $2\mid n$ & \tabularnewline
\hline 
$GL_{m}(\mathbb{C})$ & $GL_{n}(\mathbb{C})$ & $n\geqslant m$ &  & \tabularnewline
\hline 
$GL_{m}(\mathbb{H})$ & $GL_{n}(\mathbb{H})$ & \multirow{1}{*}{$n\geqslant m$} &  & \tabularnewline
\hline 
\end{tabular}

\end{center}\medskip{}

\subsection{Generalization}

Theorem \ref{thm:Main} can be generalized to all members of (reducible)
real reductive dual pairs.
\begin{thm}
\label{thm:gen} Let $G$ be a member of a real reductive dual pair,
with a maximal compact subgroup $K$. If $\pi$ is an irreducible
$(\mathfrak{g},K)$-module, and $\sigma$, $\sigma'\in\mathcal{R}(K,\pi)$,
then $\varepsilon(\sigma)=\varepsilon(\sigma')$.\end{thm}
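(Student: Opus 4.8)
The plan is to deduce Theorem~\ref{thm:gen} from Theorem~\ref{thm:Main} by factoring through the decomposition of a reductive dual pair into irreducible ones. First I would fix a real reductive dual pair $(G,G')$ in some $Sp(W)$ having the given $G$ as a member, and apply the decomposition recalled above: there is an orthogonal direct sum $W=\bigoplus_{i=1}^{k}W_{i}$ on which $G\cdot G'$ acts irreducibly factor by factor, giving an internal direct product $G=G_{1}\times\cdots\times G_{k}$ with each $(G_{i},G_{i}')$ an irreducible real reductive dual pair. Comparing Howe's classification of irreducible pairs with the table in Theorem~\ref{thm:Main}, one checks that the list of members of irreducible pairs is exactly $\{GL_{m}(\mathbb{R}),GL_{m}(\mathbb{C}),GL_{m}(\mathbb{H}),O_{p}(\mathbb{C}),O(p,q),Sp(p,q),U(p,q),Sp_{2n}(\mathbb{C}),Sp_{2n}(\mathbb{R}),O^{*}(2n)\}$, so each $G_{i}$ occurs in that table. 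Since all maximal compact subgroups are conjugate and the statement is unchanged under replacing $K$ by a conjugate (twisting $\pi$ accordingly), I may take $K=K_{1}\times\cdots\times K_{k}$ with each $K_{i}$ the standard maximal compact of $G_{i}$; then $\mathfrak{g}=\bigoplus_{i}\mathfrak{g}_{i}$, and the parity $\varepsilon$ on $\mathcal{R}(K)$ is precisely the multiplicative extension of the parities on the $\mathcal{R}(K_{i})$.

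Next I would use the standard tensor-factorization of irreducible modules over a direct product: an irreducible $(\mathfrak{g},K)$-module $\pi$ --- automatically admissible, as noted earlier --- is isomorphic to an outer tensor product $\pi_{1}\boxtimes\cdots\boxtimes\pi_{k}$ with each $\pi_{i}$ an irreducible $(\mathfrak{g}_{i},K_{i})$-module. This follows from the identification of the Hecke algebra $\mathcal{H}(\mathfrak{g},K)$ with $\bigotimes_{i}\mathcal{H}(\mathfrak{g}_{i},K_{i})$ together with the countable-dimension form of Schur's lemma; I would cite the account in Wallach's \emph{Real Reductive Groups} (or, at worst, sketch the Jacobson-density argument) rather than reprove it. It follows that $\mathcal{R}(K,\pi)=\bigl\{\bigotimes_{i=1}^{k}\sigma_{i}:\sigma_{i}\in\mathcal{R}(K_{i},\pi_{i})\text{ for all }i\bigr\}$.

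Finally, given $\sigma=\bigotimes_{i}\sigma_{i}$ and $\sigma'=\bigotimes_{i}\sigma_{i}'$ in $\mathcal{R}(K,\pi)$, for each $i$ both $\sigma_{i}$ and $\sigma_{i}'$ lie in $\mathcal{R}(K_{i},\pi_{i})$, so Theorem~\ref{thm:Main} applied to the group $G_{i}$ gives $\varepsilon(\sigma_{i})=\varepsilon(\sigma_{i}')$. Summing over $i$ in $\mathbb{Z}/2\mathbb{Z}$ and invoking the definition $\varepsilon(\bigotimes_{i}\tau_{i})=\sum_{i}\varepsilon(\tau_{i})$ yields $\varepsilon(\sigma)=\sum_{i}\varepsilon(\sigma_{i})=\sum_{i}\varepsilon(\sigma_{i}')=\varepsilon(\sigma')$, as required.

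The one step needing genuine care --- and the main obstacle --- is the tensor factorization of irreducible $(\mathfrak{g},K)$-modules over a direct product: it must be stated at the level of $(\mathfrak{g},K)$-modules rather than only for unitary or Casselman--Wallach representations, and this is exactly where the admissibility of irreducible $(\mathfrak{g},K)$-modules is used. The remaining steps are bookkeeping against Howe's classification and the elementary multiplicativity of $\varepsilon$.
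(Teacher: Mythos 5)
Your proposal is correct and matches the paper's own proof essentially step for step: decompose the dual pair into irreducible ones so that $G=G_{1}\times\cdots\times G_{k}$ and $K=K_{1}\times\cdots\times K_{k}$, factor the irreducible $(\mathfrak{g},K)$-module as an outer tensor product of irreducible $(\mathfrak{g}_{i},K_{i})$-modules, apply Theorem~\ref{thm:Main} componentwise, and use the additivity of $\varepsilon$ on tensor products. The only cosmetic difference is the reference for the tensor factorization --- the paper cites Gourevitch--Kemarsky whereas you invoke Wallach's Hecke-algebra plus Schur's-lemma argument; both are standard and adequate.
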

\begin{proof}
Any real reductive dual pair is a direct sum of irreducible ones,
so $G=G_{1}\times G_{2}\times\cdots\times G_{r}$ with each $G_{i}$
a member of an irreducible real reductive dual pair. Then $K=K_{1}\times\cdots\times K_{r}$
with $K_{i}$ a maximal compact subgroup of $G_{i}$. By \cite{GourevitchKemarsky2012irreducible},
$\pi=\bigotimes_{i=1}^{r}\pi_{i}$, where $\pi_{i}$ is an irreducible
$(\mathfrak{g}_{i},K_{i})$-module. Moreover, $\sigma=\bigotimes_{i=1}^{r}\sigma_{i}$
and $\sigma'=\bigotimes_{i=1}^{r}\sigma_{i}'$, with $\sigma_{i}$
and $\sigma_{i}'\in\mathcal{R}(K_{i},\pi_{i})$. Theorem \ref{thm:Main}
holds for $(G_{i},K_{i})$ (up to isomorphisms). So $\varepsilon(\sigma_{i})=\varepsilon(\sigma'_{i})$
for all $i$. Therefore, $\varepsilon(\sigma)=\sum_{i=1}^{r}\varepsilon(\sigma_{i})=\sum_{i=1}^{r}\varepsilon(\sigma_{i}')=\varepsilon(\sigma')$.
\end{proof}
In the end, please note that the phenomenon of parity preservation
of $K$-types in an irreducible admissible representation is well-known
to experts for many kinds of Lie groups for many years. Many cases
can be proved in a more elementary way, without the help of Howe's
theory of theta correspondence. If $G$ is connected with $\mathrm{rank}(G)=\mathrm{rank}(K)$
and of type A, C or D (for example $G=U(p,q)$ or $Sp(2n,\mathbb{R})$),
this follows easily from the fact that the difference between any
two (highest) weights in an irreducible $(\mathfrak{g},K)$-module
is a sum of roots, which are ``even'' in our sense of parity. However,
in other cases, especially when $G$ is disconnected (so that the
definition of ``parity'' is less natural), our approach makes the
phenomenon much clearer.

This note extends parts of the author\textquoteright s doctoral thesis
work in HKUST. It was written partially while the author was visiting
the Institute for Mathematical Sciences, National University of Singapore
in 2016. The visit was supported by the Institute. It was completed
in Sun Yat-sen University. The author appreciates the referees for
suggesting many improvements of the manuscript.

\bibliographystyle{amsalpha}
\bibliography{bibfile}

\end{document}